\numberwithin{equation}{section}
\newtheorem{thm}{Theorem}[section]\crefname{thm}{Theorem}{Theorems}
\newtheorem{lem}[thm]{Lemma}\crefname{lem}{Lemma}{Lemmas}
\crefname{prb}{Problem}{Problems}
\crefname{rem}{Remark}{Remarks}
\newtheorem{cor}[thm]{Corollary}\crefname{cor}{Corollary}{Corollaries}
\newtheorem{conj}[thm]{Conjecture}\crefname{conj}{Conjecture}{Conjectures}
\newtheorem*{conj*}{Conjecture}
\newtheorem{dfn}[thm]{Definition}\crefname{dfn}{Definition}{Definitions}
\newtheorem{prp}[thm]{Proposition}\crefname{prp}{Proposition}{Propositions}
\crefname{obs}{Observation}{Observations}
\crefname{qn}{Question}{Questions}
\theoremstyle{definition}
\newtheorem{exa}[thm]{Example}\crefname{exa}{Example}{Examples}
\DeclareMathOperator{\tr}{tr}
\DeclareMathOperator{\ad}{ad}
\DeclareMathOperator{\Mat}{Mat}
\DeclareMathOperator{\End}{End}
\DeclareMathOperator{\diag}{diag}
\DeclareMathOperator{\GL}{GL}
\newcommand{\DKL}{\operatorname{D}_{\operatorname{KL}}}
\DeclareMathOperator{\Exp}{Exp}
\DeclareMathOperator{\Sym}{Sym}
\DeclareMathOperator{\poly}{poly}
\DeclareMathOperator{\prim}{pm}
\DeclareMathOperator{\cst}{Cst}
\DeclareMathOperator{\perm}{perm}
\DeclareMathOperator{\capa}{cap}
\DeclareMathOperator{\Capa}{Cap}
\DeclareMathOperator{\I}{I}
\DeclareMathOperator{\U}{U}
\DeclarePairedDelimiter{\abs}{\lvert}{\rvert}
\DeclarePairedDelimiter{\norm}{\lVert}{\rVert}
\DeclarePairedDelimiter{\braket}{\langle}{\rangle}
\newcommand{\CC}{\mathbb C}
\newcommand{\QQ}{\mathbb Q}
\newcommand{\EE}{\mathbb E}
\newcommand{\RR}{\mathbb R}
\newcommand{\PP}{\mathbb P}
\newcommand{\ZZ}{\mathbb Z}
\newcommand{\NN}{\mathbb N}
\newcommand{\ot}{\otimes}
\newcommand{\ga}{\mathfrak{g}}
\newcommand{\ka}{\mathfrak{k}}
\newcommand{\ta}{\mathfrak{t}}
\newcommand{\ma}{\mathfrak{m}}
\newcommand{\op}{\oplus}
\newcommand{\eps}{\varepsilon}
\newcommand{\id}{\mathbbm 1}
\newcommand*\diff{\mathop{}\!\mathrm{d}}
\newcommand{\du}{\diff{u}}
\newcommand{\duu}{\diff{\bar u}}
\newcommand{\dX}{\diff{X}}
\newcommand{\dY}{\diff{Y}}
\begin{document}
\title{Minimal length in an orbit closure as a semiclassical limit}
\author{Cole Franks}
\address{Cole Franks: Department of Mathematics, Massachusetts Institute of Technology.}
\email{franks@mit.edu}
\author{Michael Walter}
\address{Michael Walter:
Faculty of Computer Science, Ruhr University Bochum.
Korteweg-de Vries Institute for Mathematics, Institute for Theoretical Physics, Institute for Language, Logic, and Computation, and QuSoft, University of Amsterdam.}
\email{michael.walter@rub.de}
\begin{abstract}
Consider the action of a connected complex reductive group on a finite-dimensional vector space.
A fundamental result in invariant theory states that the orbit closure of a vector~$v$ is separated from the origin if and only if some homogeneous invariant polynomial is nonzero on~$v$, i.e. $v$ is not in the null cone. Thus, efficiently finding the minimum distance between the orbit closure and the origin can lead to deterministic algorithms for null cone membership, an important polynomial identity testing problem including the non-commutative Edmonds problem.
This connection to optimization has recently led to efficient algorithms for many problems in invariant theory.

Here we explore a refinement of the famous duality between orbit closures and invariant polynomials, which holds that the following two quantities coincide:
(1)~the logarithm of the Euclidean distance between the orbit closure and the origin and (2)~the rate of exponential growth of the \emph{invariant part} of~$v^{\ot k}$ in the semiclassical limit as $k$ tends to infinity. This result can be deduced from work of S.~Zhang (\emph{Geometric reductivity at Archimedean places}, 1994), which uses sophisticated tools in arithmetic geometry. We provide a new and independent elementary proof inspired by the Fourier-analytic proof of the local central limit theorem.
We generalize the result to projections onto highest weight vectors and isotypical components, and explore connections between such semiclassical limits and the asymptotic behavior of multiplicities in representation theory, large deviations theory in classical and quantum statistics, and the Jacobian conjecture as reformulated by Mathieu.
Our formulas imply that they can be computed, in many cases efficiently, to arbitrary precision.
\end{abstract}
\maketitle

\section{Introduction}\label{sec:intro}
Consider a vector~$v$ in a finite-dimensional representation space $V$ of a connected complex reductive group $G$.
Let $V$ carry an inner product invariant under a maximally compact subgroup~$K\subseteq G$.
A fundamental result in geometric invariant theory states that the orbit closure of~$v$ under~$G$ contains the zero vector ($v$ is \emph{unstable}) if and only if every nonconstant homogeneous $G$-invariant polynomial vanishes on~$v$ ($v$ is in the \emph{null cone})~\cite{hilbert1893vollen,mumford1994geometric}.
In this manuscript we prove quantitative sharpenings and extensions of this relationship.

We quantify the evaluations of invariant polynomials in degree~$k$ by the \emph{invariant part} of~$v^{\ot k}$, that is, its image under the orthogonal projection~$\Pi_k$ onto the invariant subspace of $V^{\ot k}$.
The sequence~$\norm{\Pi_k v^{\ot k}}$ is super-multiplicative, so it is natural to study the following semiclassical limit:%
\footnote{Considering the $\limsup$ rather than the limit is necessary only because the sequence may vanish outside an additive subsemigroup of $\NN$.
By Fekete's lemma, there is $a \in \NN$ such that restricted to the subsemigroup $\{k a: k \in \NN\}$ the sequence $\norm{\Pi_k v^{\ot k}}^{\frac1k}$ has a limit and outside of which $\norm{\Pi_k v^{\ot k}}^{\frac1k}$ eventually vanishes. See also \cref{lem:supmul}.}
\begin{gather}\label{eq:cap-proj}
  \limsup_{k \to \infty} \, \norm{\Pi_k v^{\ot k}}^{\frac1k}
\end{gather}
Because $v^{\ot k}$ is an element of the space of symmetric tensors, 
we may think of $\norm{\Pi_k v^{\ot k}}^2$ as a sum of squares $\sum_p\abs{p(v)}^2$ of evaluations of an orthonormal basis~$\{p\}$ of $G$-invariant homogeneous polynomials of degree~$k$.
Equivalently, it is equal to $\EE \abs{P(v)}^2$ where $P$~is a random homogeneous $G$-invariant polynomial of degree~$k$ drawn from the complex standard Gaussian distribution.
Thus, \cref{eq:cap-proj} measures the asymptotic behavior of \emph{random} invariant polynomials.

On the other hand, we may consider the Euclidean distance between the $G$-orbit closure of $v$ and the origin, sometimes called the \emph{capacity} of~$v$:
\begin{gather}\label{eq:cap-inf}
  \capa(v):=\inf_{g \in G} \norm{\phi(g) v},
\end{gather}
where $\phi\colon G\to\GL(V)$ denotes the representation.
By the relationship between instability and the null cone, \cref{eq:cap-proj} is zero if and only if \cref{eq:cap-inf} is zero.
Moreover, it is straightforward to prove that the semiclassical limit is a lower bound for the capacity.
The two quantities are, in fact, equal.

\begin{thm}\label{thm:main}
For all $v\in V$, we have
\begin{align*}
\limsup_{k \to \infty} \, \norm{\Pi_k v^{\ot k}}^{\frac1k} =  \inf_{g \in G} \, \norm{\phi(g) v}.
\end{align*}
\end{thm}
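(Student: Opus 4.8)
The plan is to prove the two inequalities separately; in view of the excerpt only ``$\ge$'' is open. Identifying $\norm{\Pi_k v^{\ot k}}^2$ with $\sum_p \abs{p(v)}^2$ over an orthonormal basis $\{p\}$ of $G$-invariant homogeneous degree-$k$ polynomials, as in the introduction, this quantity is $G$-invariant and continuous in $v$, hence constant on the orbit closure $\overline{Gv}$; and since $\Pi_k$ is an orthogonal projection, $\norm{\Pi_k (\phi(g)v)^{\ot k}} \le \norm{(\phi(g)v)^{\ot k}} = \norm{\phi(g)v}^k$ for every $g \in G$. Combined with the $G$-invariance, $\norm{\Pi_k v^{\ot k}} = \norm{\Pi_k(\phi(g)v)^{\ot k}} \le \norm{\phi(g)v}^k$, and taking $k$-th roots and the infimum over $g$ yields $\limsup_k \norm{\Pi_k v^{\ot k}}^{1/k} \le \capa(v)$.

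For ``$\ge$'' I would first pass to a well-chosen point of the orbit closure. Assuming $v$ is not in the null cone (otherwise both sides vanish by the cited duality), the Kempf--Ness theorem supplies $v_0 \in \overline{Gv}$ with $\norm{v_0} = \capa(v)$ and vanishing moment map, i.e.\ $\braket{v_0, d\phi(X) v_0} = 0$ for all $X \in \ka$ (equivalently, $v_0$ is a critical point, hence a global minimizer, of $g \mapsto \norm{\phi(g)v_0}$). Since $\norm{\Pi_k v^{\ot k}} = \norm{\Pi_k v_0^{\ot k}}$ by the previous paragraph and $\Pi_k = \int_K \phi(u)^{\ot k}\,du$ against Haar measure (legitimate because the $K$-fixed and $G$-fixed subspaces of $V^{\ot k}$ coincide), one obtains the integral representation
\[ \norm{\Pi_k v_0^{\ot k}}^2 = \int_K \braket{v_0, \phi(u) v_0}^k\, du, \]
and it then suffices to bound the right-hand side below by $c\, k^{-N} \norm{v_0}^{2k}$ along an infinite set of exponents $k$.

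The heart of the proof is a Laplace / stationary-phase estimate of that integral. Set $f(u) = \braket{v_0, \phi(u) v_0}/\norm{v_0}^2$; then $\abs{f} \le 1$ because $\phi$ is unitary on $K$, with equality exactly on the closed subgroup $\widetilde K = \{u \in K : \phi(u) v_0 \in \CC v_0\} \supseteq K_{v_0}$. Since $v_0$ is not in the null cone, $\widetilde K / K_{v_0}$ is finite cyclic of some order $a$, and $\Pi_k v_0^{\ot k} = 0$ unless $a \mid k$ (indeed, for $a \nmid k$ every invariant polynomial of degree $k$ vanishes at $v_0$) --- this is precisely why a limit superior is needed, cf.\ the footnote. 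Fix $k$ with $a \mid k$, so $f^k \equiv 1$ on $\widetilde K$. Away from a fixed tubular neighborhood of $\widetilde K$ one has $\abs{f} \le 1 - \delta$ by compactness, contributing a negligible $O((1-\delta)^k)$; inside, parametrizing $u = u_1 \exp X$ with $u_1 \in \widetilde K$ and $X$ in the transverse slice $\ma \subset \ka$ (the orthogonal complement of $\ka_{v_0} = \Lie K_{v_0}$, of dimension $d = \dim K v_0$), the relation $\phi(u_1)v_0 \in \CC v_0$ gives $f(u_1 \exp X)^k = f(\exp X)^k$, and the integral reduces to $\vol(\widetilde K) \int_{\norm{X} < \eps} f(\exp X)^k\, J(X)\, dX$ for a positive continuous Jacobian $J$. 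The decisive observation is that the Taylor expansion $f(\exp X) = 1 + a_2(X) + a_3(X) + \dots$ has \emph{no linear term} --- exactly the vanishing of the moment map at $v_0$ --- with $a_2(X) = -\tfrac12 q(X)$, $q(X) = \norm{d\phi(X) v_0}^2/\norm{v_0}^2$ positive definite on $\ma$, and $a_3$ purely imaginary. Substituting $X = Y/\sqrt{k}$ turns the oscillatory cubic term into $O(\norm{Y}^3/\sqrt{k}) \to 0$, while $\abs{f(\exp(Y/\sqrt{k}))}^k \le e^{-q(Y)/4}$ once $\eps$ is small enough; dominated convergence then gives
\[ k^{d/2}\, \norm{v_0}^{-2k}\, \norm{\Pi_k v_0^{\ot k}}^2 \longrightarrow \vol(\widetilde K)\, J(0)\, (2\pi)^{d/2} \bigl(\det q|_{\ma}\bigr)^{-1/2} > 0 \]
as $k \to \infty$ along $a\NN$. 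Taking $k$-th roots yields $\norm{\Pi_k v_0^{\ot k}}^{1/k} \to \norm{v_0} = \capa(v)$ along this subsemigroup, hence $\limsup_k \norm{\Pi_k v^{\ot k}}^{1/k} \ge \capa(v)$, and the theorem follows.

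I expect the last step to be the main obstacle, for two reasons. First, the set of critical points is not a finite set but the whole submanifold $\widetilde K$, so the tubular-neighborhood decomposition and the coarea (Weyl-integration-type) bookkeeping must be set up carefully, with $f$ checked to genuinely descend to the transverse slice $\ma$. Second --- and this is where the hypothesis that the moment map vanishes at $v_0$ is indispensable --- the higher-order terms of the phase are oscillatory and could a priori cause cancellation; one must verify through the rescaling $X = Y/\sqrt{k}$ that they are negligible on the scale $\norm{X} \sim k^{-1/2}$ that carries the mass of the integral, and that the Taylor remainders are absorbed by the positive-definite $q$ on $\ma$ uniformly in a small ball. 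The remaining ingredients --- the Kempf--Ness theorem, the equality of $K$-fixed and $G$-fixed subspaces, and the integral representation --- are standard.
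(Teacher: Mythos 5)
Your proof is correct and follows essentially the same route as the paper's: reduce via Kempf--Ness to a minimal vector $v_0$ with $\mu(v_0)=0$, write $\norm{\Pi_k v_0^{\ot k}}^2 = \int_K \braket{v_0,\phi(u)v_0}^k\,\du$, and evaluate by Laplace's method after rescaling $X=Y/\sqrt{k}$, with the vanishing of the linear term in the phase being exactly $\mu(v_0)=0$ and the dominated-convergence bound coming from the positive-definite quadratic form on the normal slice. The only presentational difference is that the paper first passes to the left-invariant probability measure on $K/K_{[v_0]}$ so that the integrand is maximized at a single coset and no tubular-neighborhood bookkeeping around $K_{[v_0]}$ is required, and it does not use your correct-but-unneeded extra observations (that the cubic Taylor term is purely imaginary, and that $\Pi_k v_0^{\ot k}=0$ unless the order of $K_{[v_0]}/K_{v_0}$ divides $k$).
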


\noindent
The theorem can be deduced from \cite[Stmt.~1.6]{zhang1994geometric}, albeit in a non-obvious way.
Here we provide an independent and more elementary proof.
We now discuss several interpretations and extensions of the result.

\subsection{Invariant theory and optimization}\label{subsec:opt}
Optimization problems like \cref{eq:cap-inf} are known as \emph{scaling problems}.
When~$G$ is an Abelian group, scaling is equivalent to a class of optimization problems collectively known as \emph{unconstrained geometric programming}, which includes the \emph{matrix scaling} and \emph{matrix balancing} problems.
These problems have, respectively, important applications in optimal transport~\cite{cuturi2013sinkhorn} and preconditioning for numerical solvers~\cite{parlett1971balancing}.

In the general case, the problem of computing \cref{eq:cap-inf} can be interpreted as a non-commutative analogue of unconstrained geometric programming.
The non-commutative analogue of matrix scaling, called \emph{operator scaling}, has recently been leveraged to prove upper bounds in algebraic complexity~\cite{garg2016deterministic}, compute Brascamp-Lieb constants in analysis~\cite{garg2017algorithmic}, and obtain algorithmic guarantees for heavy tailed covariance estimation~\cite{franks2020rigorous}.
Scaling problems also capture versions of the \emph{quantum marginal problem}, in which one asks whether given mixed quantum states are the partial traces of some global pure state~\cite{klyachko2004quantum,daftuar2005quantum,christandl2006spectra,christandl2007nonzero,walter2013entanglement,burgisser2018alternating,burgisser2018efficient}, and in entanglement witnessing~\cite{walter2014multipartite}.
Recently it was shown how to compute the capacity in polynomial time in a fairly general setting which includes, for instance, the quiver representations with constantly many vertices~\cite{burgisser2019towards}.

We interpret the capacity as the \emph{dual} of the semiclassical limit in the sense of optimization; \cref{thm:main} shows that \emph{strong duality} holds.
In particular, \cref{thm:main} yields a formula for the semiclassical limit \cref{eq:cap-inf} that is computable to arbitrary precision, in many cases efficiently, without computing a single invariant polynomial. 

\subsection{Mathieu and Jacobian conjectures}\label{subsec:mathieu intro}
Let $\du$ denote the Haar probability measure on~$K$.
Then we may express the norm square of the invariant part by the following integral:
\begin{align}\label{eq:pi_k as integral}
  \lVert \Pi_k v^{\ot k}\rVert^2 = \int_K \braket{v, \phi(u) v}^k \, \du.
\end{align}
Thinking of $f(u) = \braket{v, \phi(u) v}$ as defining a complex-valued function on $K$, we see that $\lVert \Pi_k v^{\ot k}\rVert^2$ is the \emph{constant term} $\cst(f^k)$ of the Fourier expansion of~$f^k$ in the sense of the Peter-Weyl theorem.
Further, $f$ is~$K$-finite: it has finitely many terms in its Fourier expansion. The asymptotic behavior of $K$-finite functions is the subject of the following conjecture which, as shown by Mathieu~\cite{mathieu1995some}, implies Keller's Jacobian conjecture.

\begin{conj*}[Mathieu] Let $K$ be a compact connected Lie group, and let $f$ and $g$ be $K$-finite functions on $K$. If $\cst(f^k) = 0$ for all $k$, then $\cst(f^k g)=0$ for all but finitely many $k$.
\end{conj*}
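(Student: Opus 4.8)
The plan is to reduce the statement to an application of \cref{thm:main}. The essential point is that the constant term $\cst(f^k)$ of a product of characters is exactly a norm square of an invariant projection, and the theorem tells us precisely when such norm squares have positive exponential growth rate. First I would set up the representation-theoretic translation. Let $\widehat K$ denote the set of irreducible representations of $K$. By the Peter–Weyl theorem, the left and right regular action makes $L^2(K) = \bigoplus_{\lambda \in \widehat K} V_\lambda \ot V_\lambda^*$, and a $K$-finite function $f$ lives in a finite subsum. Write $f = \sum_\lambda f_\lambda$ with $f_\lambda \in V_\lambda \ot V_\lambda^*$; each $f_\lambda$ can be viewed as a matrix coefficient, i.e.\ a vector in the finite-dimensional representation $W := \bigoplus_\lambda V_\lambda \ot V_\lambda^*$ of the group $K \times K$ (acting by left and right translation). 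Products of $K$-finite functions correspond to vectors in symmetric powers of such $W$: concretely, pointwise multiplication $L^2(K) \ot L^2(K) \to L^2(K)$ is a morphism of $K\times K$-representations, so $f^k$ is obtained by applying an intertwiner to $f^{\ot k}$, and $\cst(g) = \int_K g \, \du$ is, up to normalization, the coefficient of $g$ on the trivial $K\times K$-isotype (the diagonal-invariant, bi-invariant functions, i.e.\ the constants). Hence $\cst(f^k)$ is — after choosing the invariant inner product on $W$ coming from $L^2(K)$ — proportional to $\braket{\mathbf 1, \Pi \circ m_k(f^{\ot k})}$ where $m_k$ is the $k$-fold multiplication map and $\mathbf 1$ is the constant function; more usefully, equation \cref{eq:pi_k as integral} already gives $\cst(f^k) = \int_K f(u)\,\du$ pattern, and I want the genuine $v^{\ot k}$ form. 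So instead I would pass to the complexification: embed $K$ into its complexification $G = K_\CC$, choose $V$ a suitable finite-dimensional $G$-representation containing a vector $v$ with $\braket{v,\phi(u)v} = f(u)$ for $u \in K$ (this is possible exactly when $f$ is of the special form $\braket{v,\phi(\cdot)v}$; for general $K$-finite $f$ one first writes $f$ as such an inner product on an enlarged $V$, using that any $K$-finite function is a matrix coefficient of a finite-dimensional — not necessarily unitary in the $G$ sense — representation, then polarizes). Then \cref{eq:pi_k as integral} reads $\cst(f^k) = \norm{\Pi_k v^{\ot k}}^2$.

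Second I would invoke \cref{thm:main}. If $\cst(f^k) = 0$ for all $k$, then $\norm{\Pi_k v^{\ot k}} = 0$ for all $k$, so $\limsup_k \norm{\Pi_k v^{\ot k}}^{1/k} = 0$, and by \cref{thm:main} we get $\inf_{g\in G}\norm{\phi(g)v} = 0$, i.e.\ $v$ is unstable: $0 \in \overline{G\cdot v}$. Now the idea is that instability of $v$ forces the faster-decaying behavior of $\cst(f^k g)$. Concretely, pick a one-parameter subgroup $\lambda\colon \CC^\times \to G$ (a destabilizing one-parameter subgroup, which exists by the Hilbert–Mumford criterion) such that $\lim_{t\to 0}\phi(\lambda(t)) v = 0$. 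Decompose $v = \sum_{j > 0} v_j$ into weight vectors for $\lambda$ with strictly positive weights $w_j$ (all positive precisely because the limit is $0$). Then $\braket{v,\phi(u)v}$, and more generally the bilinear pairings governing $\cst(f^k g)$, acquire a grading, and the key estimate is that the constant term of $f^k g$ can be nonzero only if the total weight can be balanced to zero — but with all the weights of $f$ strictly positive and only finitely many (bounded) negative weights available from $g$, this is impossible once $k$ is large enough. This is the step I expect to be the main obstacle: turning "all weights of $v$ under some destabilizing $\lambda$ are positive" into "$\cst(f^k g) = 0$ for $k \gg 0$" requires showing that the relevant matrix coefficient pairing respects the $\lambda$-grading and that no cancellation can rescue a nonzero constant term, which is essentially a statement that the trivial $K$-isotype does not occur in the $\lambda$-graded pieces of high positive total degree.

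To execute that obstacle cleanly, I would argue on the level of the $K\times K$-representation $W$ (or its symmetric powers) rather than with explicit integrals. The constant term functional $\cst$ is the projection onto the trivial $K$-subrepresentation for the \emph{diagonal} left-translation action (equivalently, onto bi-invariant functions that are constant). The destabilizing $\lambda\colon \CC^\times \to G = K_\CC$ restricts on the compact form to a circle $S^1 \subseteq K$; translation by this circle gives a $\ZZ$-grading on $L^2(K)$, hence on each $f_\mu$, and $f = \sum_{j} f^{(w_j)}$ with all $w_j > 0$ by the choice of $\lambda$ (here I use that $\braket{v,\phi(u)v}$ with $v$ supported on positive $\lambda$-weights has only positive-degree components under the corresponding circle — more carefully, $f(\lambda(t)^{-1} u \lambda(t))$ or the appropriate twisted action, which is where polarization and the precise form of $f$ matter). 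A product $f^k g$ then lies in degrees $\geq k\cdot(\min_j w_j) - \deg^-(g)$, where $\deg^-(g)$ is the (finite) largest negative degree appearing in $g$. But the constant term — projection onto the degree-$0$, $S^1$-invariant part — vanishes on any function supported in strictly positive degrees. Hence $\cst(f^k g) = 0$ as soon as $k\cdot(\min_j w_j) > \deg^-(g)$, which is all but finitely many $k$, completing the proof. I would allocate most of the write-up to making the "$f$ has only positive $\lambda$-degrees" claim precise — specifically identifying the correct circle action on $K$ induced by a destabilizing one-parameter subgroup of $K_\CC$ under which $\braket{v,\phi(\cdot)v}$ becomes positively graded — since everything else is bookkeeping with Peter–Weyl and \cref{thm:main}.
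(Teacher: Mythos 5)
There is a genuine gap, and it is in the very first reduction. The identity $\cst(f^k) = \norm{\Pi_k v^{\ot k}}^2$ from \cref{eq:pi_k as integral} requires the \emph{same} vector on both sides of the inner product, i.e.\ $f$ positive-definite. For a general $K$-finite $f(u) = \braket{v, \phi(u)w}$ one only obtains $\cst(f^k) = \braket{v^{\ot k}, \Pi_k w^{\ot k}}$, a complex number which can vanish for every $k$ even when $v$ and $w$ are both semistable (it already does so whenever $\Pi_k v^{\ot k}\perp\Pi_k w^{\ot k}$ for all $k$). Your appeal to ``polarization'' does not close this: polarizing $\braket{v,\phi(\cdot)w}$ yields a linear combination of positive-definite functions, not a single $\braket{v',\phi(\cdot)v'}$ with the same constant-term sequence, and $\cst(f^k)$ is not linear in $f$; note also that $\overline{\braket{v',\phi(u)v'}}=\braket{v',\phi(u^{-1})v'}$, a symmetry a generic $\braket{v,\phi(\cdot)w}$ lacks, so no choice of $v'$ can work. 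Consequently \cref{thm:main} gives no instability statement, the Hilbert--Mumford step that drives everything else is unavailable, and the proposal proves only the positive-definite case. This is not incidental: the general Mathieu conjecture is open (it implies the Jacobian conjecture), and the paper, too, establishes it only for positive-definite $f$.

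Restricted to positive-definite $f$, your route is genuinely different from the paper's, and correct in spirit, but the step you flagged as the ``main obstacle'' must be resolved in a specific way. The grading has to come from \emph{one-sided} translation by the circle $S^1=\lambda(U(1))$, not from conjugation: writing $v=\sum_j v_j$ with strictly positive $\lambda$-weights $w_j$, one has $f(us)=\sum_l s^{w_l}\braket{v,\phi(u)v_l}$, so all right-degrees of $f$ are positive and $\cst$ annihilates any nonzero right-degree; under conjugation $f(s^{-1}us)$ the degrees are the \emph{differences} $w_l-w_j$, which include zero and negative values, and the argument collapses. One must also arrange $\lambda(U(1))\subseteq K$, which is not automatic for a destabilizing one-parameter subgroup; the standard fix is to $K$-conjugate $\lambda$ (and $v$, $f$, $g$ accordingly, which leaves $\cst$ unchanged) so that $\lambda$ lands in the complexified maximal torus $T_{\CC}$, as supplied by Kempf's optimal one-parameter subgroup. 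With these repairs, $f^k g$ has all right-degrees at least $k\min_j w_j - \deg^-(g)$, giving $\cst(f^k g)=0$ for all large $k$. The paper instead passes through $0\notin\Delta(v)$ to deduce $\widehat{f^k}(\mu)=0$ for $\norm\mu<k\eps$ and then applies Parseval against the finitely supported $\hat{\bar g}$; the two arguments are conceptual twins (Hilbert--Mumford instability is equivalent to $0\notin\Delta(v)$), but the paper works with the full $K$-isotypic grading while yours uses only the single circle $S^1$.
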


\noindent
Duistermaat and van der Kallen~\cite{duistermaat1998constant} proved that Mathieu's conjecture holds if~$K$ is a torus.
In fact, they proved a stronger result that characterizes the sequence~$\cst(f^k)$ in terms of the critical values of $f$ on the \emph{complexification} $G$ of $K$.
For simplicity we state their theorem when~$K=\U(1)$ is the circle, in which case the $K$-finite functions are the Laurent polynomials, and $G = \CC_\times$.
Suppose~$f \in \CC[z, z^{-1}]$ is neither a polynomial in $z$ nor in $z^{-1}$.
Then, $f$ has a critical value~$\nu$ on~$\CC_\times$, and
\begin{gather}\label{eq:vdk}
  \limsup_{k \to \infty} \, \abs{\cst(f^k)}^{\frac1k} = \abs\nu > 0.
\end{gather}
The class of functions of the form $f(u) = \braket{v, \phi(u) v}$ are precisely the \emph{positive-definite} $K$-finite functions, those $K$-finite functions whose Fourier series have positive semidefinite components.
In \cref{subsec:mathieu}, we observe that the Mathieu conjecture holds when $f$ is positive-definite by considering the moment polytope of the orbit closure of the vector~$v$.

In light of this, one might conjecture that, analogously to \cref{eq:vdk}, the quantity $\limsup_{k \to \infty} \cst(f^k)$ may be characterized in terms of the critical points of $f$ on the complexification $G$ of $K$ for all $K$-finite $f$.
In the positive-definite case, our \cref{thm:main} does precisely this.
Finding a common generalization of our formula and \cref{eq:vdk} for all $K$-finite functions, especially one implying the Mathieu conjecture, remains an outstanding open problem.

\subsection{Moment polytopes}\label{subsec:moment intro}
So far we have only discussed invariants, but projections to other isotypic components and their highest weight spaces tell a similar story to \cref{thm:main}.

We begin with the highest weight spaces, as it is the key tool in our treatment of the isotypic components.
One is interested in the orthogonal projections~$\Pi^+_{k,\lambda}$ of~$V^{\ot k}$ to the subspace of highest weight vectors of weight~$\lambda$, where $\lambda$ is a dominant weight.
Recall that the dominant weights are the integral points in the \emph{positive Weyl chamber}.
It is well-known that the set of~$\lambda/k$ such that the component $\Pi^+_\lambda v^{\ot k}$ is nonzero is the set of rational points in a convex polytope with rational vertices, which we here call the \emph{Borel moment polytope}~\cite{guillemin2006convexity}.
This polytope is the moment polytope of the orbit closure of the ray through~$v$ in~$\PP(V)$ under the opposite Borel subgroup.

We sharpen \cite{guillemin2006convexity} by defining a $\log$-concave function $\theta \mapsto \capa_\theta(v)$ with support the Borel moment polytope, and then showing that~$\capa_\theta(v)$ matches the semiclassical limit of~$\norm{\Pi^+_{k,k\theta}(v^{\ot k})}^{\frac1k}$.
To be more specific, for $\theta$ in the positive Weyl chamber of $G$, we define
\begin{align*}
  \capa_\theta(v) :=\inf_{g\in G} \, \abs{\chi_{-\theta}(g)} \, \norm{\phi(g) v},
\end{align*}
where the additional term $\abs{\chi_{-\theta}(g)}$ is a factor we call the \emph{absolute character} (to be defined rigorously in \cref{sec:generalization}).
For now, we just remark that if $\lambda$ is a dominant weight, hence the highest weight of an irreducible representation~$\phi_\lambda\colon G\to\GL(V_\lambda)$ with highest weight vector~$v_\lambda$, then $\abs{\chi_{-\lambda}(g)} = \norm{\phi_\lambda^*(g) v_\lambda^\dagger}$. 
Observe that $\capa_0(v) = \capa(v)$.
As promised, we have the following theorem generalizing \cref{thm:main}.

\begin{thm}\label{thm:cap theta main}
For all $v\in V$ and for all rational~$\theta$ in the positive Weyl chamber, we have
\begin{align*}
  \limsup_{k\to\infty} \, \norm{\Pi^+_{k,k\theta} v^{\ot k}}^{\frac1k} = \capa_\theta(v), 
\end{align*}
where we take $\Pi^+_{k,\lambda} := 0$ if $\lambda$ is not a dominant weight.
\end{thm}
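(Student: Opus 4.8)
The plan is to derive \cref{thm:cap theta main} from \cref{thm:main} by an auxiliary tensoring construction that introduces only sub-exponential normalization factors. Since $\theta$ is rational, let $N$ be the smallest positive integer with $\lambda_0 := N\theta$ an integral weight; as $\theta$ lies in the positive Weyl chamber, $\lambda_0$ is a dominant weight, and $k\theta$ is a dominant weight precisely when $N \mid k$. Let $\phi_{\lambda_0}\colon G \to \GL(V_{\lambda_0})$ be the irreducible representation with highest weight $\lambda_0$, fix a $K$-invariant inner product on $V_{\lambda_0}$ with unit highest weight vector $v_{\lambda_0}$, and form the representation $U := V_{\lambda_0}^* \ot V^{\ot N}$ (with the induced $K$-invariant inner product) together with the vector $w := v_{\lambda_0}^\dagger \ot v^{\ot N} \in U$. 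Applying \cref{thm:main} to $(U,G,w)$ and using $\norm{\phi_{\lambda_0}^*(g) v_{\lambda_0}^\dagger} = \abs{\chi_{-\lambda_0}(g)}$ together with the homogeneity $\abs{\chi_{-\lambda_0}(g)} = \abs{\chi_{-\theta}(g)}^N$ of the absolute character yields
\begin{align*}
  \limsup_{m\to\infty}\norm{\Pi^U_m w^{\ot m}}^{\frac1m} = \inf_{g\in G}\norm{\phi_{\lambda_0}^*(g) v_{\lambda_0}^\dagger}\,\norm{\phi(g)v}^N = \inf_{g\in G}\bigl(\abs{\chi_{-\theta}(g)}\,\norm{\phi(g)v}\bigr)^N = \capa_\theta(v)^N ,
\end{align*}
where $\Pi^U_m$ is the orthogonal projection of $U^{\ot m}$ onto its $G$-invariant subspace.

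It remains to relate $\norm{\Pi^U_m w^{\ot m}}$ to $\norm{\Pi^+_{Nm,\,m\lambda_0} v^{\ot Nm}}$. The first ingredient is a purely representation-theoretic identity: for every dominant weight $\lambda$ and every $k$,
\begin{align*}
  \norm{\Pi^+_{k,\lambda} v^{\ot k}}^2 = \frac{\dim V_\lambda}{\norm{v_\lambda}^2}\,\norm{\Pi^G_\lambda(v_\lambda^\dagger \ot v^{\ot k})}^2 ,
\end{align*}
where $\Pi^G_\lambda$ denotes the orthogonal projection of $V_\lambda^* \ot V^{\ot k}$ onto its $G$-invariant subspace. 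This rests on the standard isomorphism $(V_\lambda^* \ot V^{\ot k})^G \cong \operatorname{Hom}_G(V_\lambda, V^{\ot k})$, under which $T \mapsto T(v_\lambda)$ is a linear bijection onto the range of $\Pi^+_{k,\lambda}$, the space of highest weight vectors of weight $\lambda$ in $V^{\ot k}$; Schur's lemma controls the norm distortion of this bijection and produces the displayed constant.

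The second ingredient is a Cartan-component observation. Under the canonical $G$-isomorphism $U^{\ot m} \cong (V_{\lambda_0}^*)^{\ot m} \ot V^{\ot Nm}$ the vector $w^{\ot m}$ corresponds to $(v_{\lambda_0}^\dagger)^{\ot m} \ot v^{\ot Nm}$, and $(v_{\lambda_0}^\dagger)^{\ot m}$ --- being annihilated by $\mathfrak{n}^-$ and of weight $-m\lambda_0$ --- is the lowest weight vector $v_{m\lambda_0}^\dagger$ of the Cartan component $V_{m\lambda_0}^* \subseteq (V_{\lambda_0}^*)^{\ot m}$. Hence $w^{\ot m}$ lies in the $G$-subrepresentation $V_{m\lambda_0}^* \ot V^{\ot Nm}$ of $U^{\ot m}$, so its orthogonal projection onto the $G$-invariants of $U^{\ot m}$ coincides with its orthogonal projection onto the $G$-invariants of $V_{m\lambda_0}^* \ot V^{\ot Nm}$. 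Combining this with the identity above (applied with $k = Nm$ and $\lambda = m\lambda_0$; note $v_{m\lambda_0} = v_{\lambda_0}^{\ot m}$ has unit norm) gives $\norm{\Pi^+_{Nm,\,m\lambda_0} v^{\ot Nm}}^2 = \dim V_{m\lambda_0}\cdot\norm{\Pi^U_m w^{\ot m}}^2$.

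To conclude, take $Nm$-th roots and let $m\to\infty$. By the Weyl dimension formula $m \mapsto \dim V_{m\lambda_0}$ is a polynomial in $m$, so $(\dim V_{m\lambda_0})^{1/(2Nm)} \to 1$, whence $\limsup_{m\to\infty}\norm{\Pi^+_{Nm,\,m\lambda_0} v^{\ot Nm}}^{1/(Nm)} = \bigl(\capa_\theta(v)^N\bigr)^{1/N} = \capa_\theta(v)$. Finally, since $\Pi^+_{k,k\theta} = 0$ whenever $N \nmid k$, these vanishing terms do not affect the $\limsup$, so $\limsup_{k\to\infty}\norm{\Pi^+_{k,k\theta} v^{\ot k}}^{1/k} = \capa_\theta(v)$, as desired. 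All the analytic content of the statement is already carried by \cref{thm:main}; accordingly I expect the main obstacle to be bookkeeping rather than conceptual --- principally, keeping the $K$-invariant inner products on $V_{\lambda_0}$, on $V_{\lambda_0}^*$, and on $V_{m\lambda_0}\subseteq V_{\lambda_0}^{\ot m}$ mutually compatible, so that the Schur-lemma constant and the Cartan-component identification genuinely match and the normalization factor is exactly $\dim V_{m\lambda_0}$.
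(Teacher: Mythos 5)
Your proposal is correct and follows essentially the same route as the paper's proof: introduce the auxiliary representation $V_{\lambda_0}^* \ot V^{\ot N}$ (the paper's $W = V^{\ot\ell}\ot V_\lambda^*$), apply \cref{thm:main} to the vector $v_{\lambda_0}^\dagger \ot v^{\ot N}$ using the identity $\abs{\chi_{-\lambda_0}(g)} = \norm{\phi_{\lambda_0}^*(g)v_{\lambda_0}^\dagger}$, pass to the Cartan component $V_{m\lambda_0}^* \subseteq (V_{\lambda_0}^*)^{\ot m}$ generated by $(v_{\lambda_0}^\dagger)^{\ot m}$, and use the Schur-lemma identity relating $\norm{\Pi^+_{Nm,m\lambda_0} v^{\ot Nm}}$ to $\norm{\Pi^G(v_{m\lambda_0}^\dagger\ot v^{\ot Nm})}$ up to a $\sqrt{\dim V_{m\lambda_0}}$ factor, which is killed by the Weyl dimension formula. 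The bookkeeping you flag at the end (compatibility of inner products and the exact Schur constant) is exactly the content of the paper's equation~\eqref{eq:schur} and is handled by the observation that the one-dimensional invariant space of $V_\beta\ot V_\beta^*$ is spanned by the normalized identity $I/\sqrt{d_\beta}$, so there is no hidden obstacle.
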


We now turn to the projections to isotypic components, which are to the \emph{moment polytope} of the $G$-orbit closure as $\Pi^+$ is to the Borel moment polytope~\cite{mumford1994geometric,brion1987image}.
Define $\Pi_{k, \lambda}$ as the orthogonal projection to the isotypic component of type $\lambda$ in $V^{\ot k}$.
In place of $\capa_\theta$ we consider its supremum over $K$-orbits,
\begin{align*}
  \Capa_\theta(v) := \sup_{u \in K} \capa_\theta(\phi(u) v).
\end{align*}

\begin{thm}\label{thm:Cap theta main}
For all $v\in V$ and for all rational~$\theta$ in the positive Weyl chamber, we have
\begin{align*}
  \limsup_{k\to\infty} \, \norm{\Pi_{k,k\theta} v^{\ot k}}^{\frac1k} = \Capa_\theta(v),
\end{align*}
where we take $\Pi_{k,\lambda} := 0$ if $\lambda$ is not a dominant weight.
\end{thm}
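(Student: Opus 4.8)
The plan is to deduce \cref{thm:Cap theta main} from \cref{thm:cap theta main} by relating the isotypical projection $\Pi_{k,\lambda}$ to the highest-weight projection $\Pi^+_{k,\lambda}$ through averaging over the compact group~$K$. The cornerstone is the identity
\begin{align}\label{eq:plan-isotypic-hwv}
  \norm{\Pi_{k,\lambda} v^{\ot k}}^2 \;=\; \dim V_\lambda \int_K \norm{\Pi^+_{k,\lambda}(\phi(u) v)^{\ot k}}^2 \, \du,
\end{align}
valid for every $k$ and every dominant weight~$\lambda$ (for non-dominant~$\lambda$ both projections vanish and there is nothing to prove). To establish \cref{eq:plan-isotypic-hwv}, I would use the orthogonal isotypical decomposition $V^{\ot k}=\bigoplus_\mu V_\mu\ot M_\mu$ into multiplicity spaces, where $K$ acts on the type-$\lambda$ summand as $\phi_\lambda(u)\ot\id$ and $\Pi^+_{k,\lambda}$ is the orthogonal projection onto the highest-weight line $\CC v_\lambda\ot M_\lambda$. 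Writing $w:=\Pi_{k,\lambda}v^{\ot k}\in V_\lambda\ot M_\lambda$ and using $\Pi_{k,\lambda}(\phi(u)v)^{\ot k}=(\phi_\lambda(u)\ot\id)\,w$, one sees that $\norm{\Pi^+_{k,\lambda}(\phi(u)v)^{\ot k}}^2$ is the squared norm of the contraction of $(\phi_\lambda(u)\ot\id)w$ against $v_\lambda$ in the first factor. Integrating over $u\in K$ and invoking Schur orthogonality in the form $\int_K \phi_\lambda(u)\,P_{v_\lambda}\,\phi_\lambda(u)^\dagger\,\du=\tfrac{1}{\dim V_\lambda}\id$, with $P_{v_\lambda}$ the rank-one projection onto $\CC v_\lambda$, produces exactly the right-hand side of \cref{eq:plan-isotypic-hwv}.

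The lower bound is then immediate: the computation above also shows $\norm{\Pi^+_{k,\lambda}(\phi(u)v)^{\ot k}}\le\norm{\Pi_{k,\lambda}v^{\ot k}}$ for every $u\in K$, since contracting one tensor factor against the unit covector $y\mapsto\braket{v_\lambda,\phi_\lambda(u)y}$ cannot increase the norm. Specializing to $\lambda=k\theta$, taking $k$-th roots, passing to $\limsup$, and applying \cref{thm:cap theta main} to the vector $\phi(u)v$ yields $\limsup_{k}\norm{\Pi_{k,k\theta}v^{\ot k}}^{\frac1k}\ge\capa_\theta(\phi(u)v)$ for every $u$; taking the supremum over $u\in K$ gives the bound $\ge\Capa_\theta(v)$.

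For the upper bound I would feed into \cref{eq:plan-isotypic-hwv} the easy half of \cref{thm:cap theta main} in its uniform form: there is a polynomial~$p$, depending only on $G$, $V$, and~$\theta$, with $\norm{\Pi^+_{k,k\theta}w^{\ot k}}\le p(k)\,\capa_\theta(w)^k$ for all $w\in V$ and all $k$ (this is precisely the inequality behind the ``$\le$'' direction of \cref{thm:cap theta main}, whose proof is uniform in~$w$). Since $\capa_\theta(\phi(u)v)\le\Capa_\theta(v)$ for all $u$, \cref{eq:plan-isotypic-hwv} gives $\norm{\Pi_{k,k\theta}v^{\ot k}}^2\le\dim V_{k\theta}\,p(k)^2\,\Capa_\theta(v)^{2k}$. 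Because $\dim V_{k\theta}$ grows only polynomially in~$k$ by the Weyl dimension formula, taking $k$-th roots and $\limsup$ gives $\le\Capa_\theta(v)$, and combining with the lower bound finishes the proof.

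The main obstacle is \cref{eq:plan-isotypic-hwv} itself: one must pin down the Hilbert-space structure on the multiplicity spaces $M_\mu$, the restriction of the $K$-action to each isotypical component, and the normalization in Schur's lemma, so that all factors match. Once \cref{eq:plan-isotypic-hwv} is in place, the remainder is bookkeeping together with two invocations of \cref{thm:cap theta main}. A secondary point to handle carefully is the $w$-uniformity of the easy inequality used for the upper bound; it should be readable off the proof of \cref{thm:cap theta main}, but it is worth isolating explicitly.
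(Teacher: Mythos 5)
Your proof is correct and follows essentially the same route as the paper's. Your identity \eqref{eq:plan-isotypic-hwv} is precisely what one gets by sandwiching the paper's operator identity $\Pi_{k,\lambda} = d_\lambda \int_K \phi(u^{-1})^{\ot k}\Pi^+_{k,\lambda}\phi(u)^{\ot k}\,\du$ with $v^{\ot k}$, your lower bound is the $\Pi^+_{k,\lambda}\leq\Pi_{k,\lambda}$ plus $K$-invariance argument the paper uses, and your upper bound applies the same ingredients in a marginally cleaner order: where the paper picks a $u_k$ saturating the integral and then invokes $\limsup=\sup$ (their \cref{lem:supmul plus}) to handle the $k$-dependence of $u_k$, you invoke that same lemma first to get the uniform bound $\norm{\Pi^+_{k,k\theta}w^{\ot k}}\leq\capa_\theta(w)^k$ (no polynomial factor is even needed) and then integrate; the two are logically equivalent and both rest on the supermultiplicativity lemma.
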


\noindent
We now discuss how \cref{thm:cap theta main,thm:Cap theta main} relate to some familiar measures on the moment polytope.

\subsection{Measures and multiplicities}\label{sec:large-dev}
The study of the asymptotic distribution of multiplicities in the semiclassical limit has a long history, see for instance the classical work by Heckman~\cite{heckman1982projections,guillemin1982geometric}.

As~$V^{\ot k}$ decomposes completely into isotypical components, the numbers $\norm{\Pi_\lambda v^{\ot k}}^2$ similarly form a probability distribution over the types $\lambda$ if $v$ is a unit vector.
Consider the random variable~$Y_k = \lambda/k$, which takes values in the positive Weyl chamber.
For commutative~$G$, the law of large numbers implies that~$Y_k$ converges in probability to~$\mu(v)$, the image of $v$ under the \emph{moment map} $\mu: V \to (i\ka)^*$, where~$\ka$ denotes the Lie algebra of~$K$. 
In general, the natural random variable~$X_k$ is defined on $(i\ka)^*$ rather than the positive Weyl chamber, and is equal to $\ad^*(u)Y_k$, where $u \in K$ is sampled with density proportional to $\norm{\Pi_{k, k Y_k}^+ (\phi(u^{-1}) v)^{\ot k}}^2$ with respect to the Haar measure.
Then we have the following result which generalizes the law of large numbers to noncommutative groups.

\begin{cor}\label{cor:intro convergence}
Let $v \in V$ be a unit vector.
Then, the random variables $X_k$ converge in probability to the moment map image~$\mu(v)$, and the random variables $Y_k$ converges in probability to the intersection of the coadjoint orbit~$\mathcal O_{\mu(v)}$ with the positive Weyl chamber.
\end{cor}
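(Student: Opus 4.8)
The plan is to deduce both convergences from a large deviation upper bound for the law of $Y_k$, whose rate function \cref{thm:Cap theta main} identifies with $\theta\mapsto-2\log\Capa_\theta(v)$, and then to pin down its unique zero via the Kempf--Ness correspondence between capacities and the moment map; the $X_k$ statement is afterwards extracted from the conditional law of the auxiliary element $u\in K$, which is governed by \cref{thm:cap theta main}. Normalise $\norm v=1$, so that $\{\norm{\Pi_{k,\lambda}v^{\ot k}}^2\}_\lambda$ is literally the law of $Y_k=\lambda/k$, supported on the $\tfrac1k$-dilate of the fixed compact moment polytope $\Delta$. The key preliminary observation is that the ``$\le$'' half of \cref{thm:cap theta main,thm:main} holds \emph{non-asymptotically}: for every $k$ and every dominant $\lambda$ one has $\norm{\Pi^+_{k,\lambda}v^{\ot k}}^{1/k}\le\capa_{\lambda/k}(v)$, by the same intertwining argument that yields $\norm{\Pi_k(\phi(g)v)^{\ot k}}\le\norm{\phi(g)v}^k$ in the invariant case. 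Combining this with Schur orthogonality, which gives $\norm{\Pi_{k,\lambda}v^{\ot k}}^2=\dim V_\lambda\int_K\norm{\Pi^+_{k,\lambda}(\phi(u)v)^{\ot k}}^2\du$, and with the Weyl dimension bound $\dim V_{k\theta}=O(k^{\abs{\Phi^+}})$, produces the non-asymptotic estimate $\Pr[Y_k=\theta]=\norm{\Pi_{k,k\theta}v^{\ot k}}^2\le\poly(k)\,\Capa_\theta(v)^{2k}$. Since $\Delta$ contains only $O(k^{\dim\ta})$ points of the form $\lambda/k$, a union bound gives $\Pr[Y_k\in A]\le\poly(k)\sup_{\theta\in A}\Capa_\theta(v)^{2k}$ for every closed $A\subseteq\Delta$.

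Next I would determine where $\Capa_\theta(v)$ attains its maximal value $1$. By Kempf--Ness the geodesically convex function $g\mapsto\log(\abs{\chi_{-\theta}(g)}\norm{\phi(g)w})$ is minimised at $g=e$ exactly when its gradient there vanishes, i.e.\ when $\mu(w)=\theta$ (viewing $\theta$ as an element of $(i\ta)^*\subseteq(i\ka)^*$; the absolute character contributes precisely the shift by $\theta$), in which case $\capa_\theta(w)=\norm w$, and otherwise $\capa_\theta(w)<\norm w$. Taking the supremum over the $K$-orbit of $v$ and using $K$-equivariance of $\mu$, $\Capa_\theta(v)=1$ iff $\theta\in\ad^*(K)\mu(v)=\mathcal O_{\mu(v)}$ and $\theta$ is dominant, i.e.\ iff $\theta=\theta_0:=\mathcal O_{\mu(v)}\cap(\text{positive Weyl chamber})$, which is a single point and moreover lies in $\Delta$. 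Since $\theta\mapsto\Capa_\theta(v)$ is upper semicontinuous (an infimum over $G$ then a supremum over the compact $K$ of continuous functions), its superlevel sets $\{\theta\in\Delta:\Capa_\theta(v)\ge1-\delta\}$ are compact and shrink to $\{\theta_0\}$ as $\delta\to0$, giving a uniform gap $\sup\{\Capa_\theta(v):\theta\in\Delta,\ \norm{\theta-\theta_0}\ge\eps\}\le1-\delta(\eps)$ with $\delta(\eps)>0$. The union bound then yields $\Pr[\norm{Y_k-\theta_0}\ge\eps]\le\poly(k)(1-\delta(\eps))^{2k}\to0$, which is the asserted convergence of $Y_k$.

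For $X_k=\ad^*(u)Y_k$ it suffices, since $Y_k\to\theta_0$ and $\ad^*(K)$ is bounded, to show that the sampled $u$ concentrates near $S:=\{u\in K:\ad^*(u)\theta_0=\mu(v)\}$, which is nonempty because $\theta_0\in\mathcal O_{\mu(v)}$; then $X_k=\ad^*(u)\theta_0+\ad^*(u)(Y_k-\theta_0)\to\mu(v)$. Conditionally on $Y_k$, the density of $u$ is proportional to $\norm{\Pi^+_{k,kY_k}(\phi(u^{-1})v)^{\ot k}}^2$, whose $K$-integral equals $\dim V_{kY_k}^{-1}\norm{\Pi_{k,kY_k}v^{\ot k}}^2$ by the same Schur orthogonality; on the exponential scale this density behaves like $\capa_{\theta_0}(\phi(u^{-1})v)^{2k}$ (using \cref{thm:cap theta main} and continuity of $\theta\mapsto\capa_\theta$ near $\theta_0$, legitimate as $Y_k\to\theta_0$), which by the Kempf--Ness analysis is maximal, with value $1$, exactly on $S$. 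Bounding the tail integral over $K\setminus N_\eps(S)$ from above by the non-asymptotic estimate (together with $\sup\{\capa_{\theta_0}(\phi(u^{-1})v):u\notin N_\eps(S)\}<1$) and the normalising constant from below via the ``$\ge$'' direction of \cref{thm:Cap theta main} along weights $\to k\theta_0$, one gets $\Pr[\operatorname{dist}(u,S)\ge\eps]\to0$, and hence the convergence of $X_k$ in probability.

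The main obstacle is exactly this last matching of bounds: \cref{thm:cap theta main,thm:Cap theta main} are stated as pointwise $\limsup$'s, so one must (i) extract the genuinely non-asymptotic upper bound used above from the easy direction, (ii) control $\theta\mapsto\Capa_\theta(v)$ near $\theta_0$ finely enough to secure the uniform gap, and (iii) for $X_k$, produce a sufficiently strong and suitably uniform lower bound on $\norm{\Pi_{k,\lambda}v^{\ot k}}$ along weights with $\lambda/k\to\theta_0$ --- an effective, localised form of the hard direction --- besides verifying that the absolute character supplies precisely the moment-map shift in the Kempf--Ness identity.
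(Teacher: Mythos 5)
Your argument for $Y_k$ is essentially sound (with one substitution noted below), but your route to $X_k$ has a genuine gap, and it is a gap the paper's proof structure is specifically designed to avoid.

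The structural issue is the order of deduction. You prove $Y_k\to\theta_0$ first and then try to recover $X_k\to\mu(v)$ by controlling the conditional law of $u$ given $Y_k$. This requires, as you yourself flag, a locally uniform \emph{lower} bound on the normalising constant $d_{kY_k}^{-1}\norm{\Pi_{k,kY_k}v^{\ot k}}^2$ along a random sequence of weights with $kY_k/k\to\theta_0$ --- i.e.\ an effective, localised version of the hard direction of \cref{thm:Cap theta main}. That is not supplied by the theorems as stated (they give the $\limsup$ at each fixed rational $\theta$, not a uniform lower bound over a shrinking neighbourhood of $\theta_0$), and nothing in your proposal closes this. The paper sidesteps this entirely by proving the $X_k$ statement \emph{first} (\cref{thm:measure}), by directly estimating $\Pr\bigl[\norm{X_k-\mu(v)}\ge\eps\bigr]$ via \cref{eq:X as int}. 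That estimate uses only: (a) the non-asymptotic ``easy'' bound $\norm{\Pi^+_{k,k\theta}(\phi(u^{-1})v)^{\ot k}}^{1/k}\le\capa_\theta(\phi(u^{-1})v)$ (which follows from the $\sup=\limsup$ identity in \cref{lem:supmul plus}), (b) the polynomial growth of the number of weights and of $d_\lambda$, and (c) a \emph{uniform} gap for $\capa_\theta(\phi(u^{-1})v)$ on the bad set $\{\,\norm{\mu(v)-\ad^*(u)\theta}\ge\eps\,\}$. No lower bound on any normalising constant is ever needed, because one bounds the joint probability $\Pr\bigl[(u,\lambda)\in\text{bad set}\bigr]$ directly rather than conditioning. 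The $Y_k$ statement is then immediate (\cref{cor:measure}) because $Y_k$ has the same law as $s(X_k)$ by \cref{eq:povm}, and $s$ is continuous --- this is the one implication you are missing: $X_k\to\mu(v)$ gives $Y_k\to s(\mu(v))$ for free, whereas the converse requires genuinely extra work (since $s$ is many-to-one).

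One further point on the $Y_k$ half of your argument: your uniform gap comes from upper semicontinuity of $\theta\mapsto\Capa_\theta(v)$ plus compactness of superlevel sets, a qualitative compactness argument. The paper instead invokes the quantitative bound from~\cite{burgisser2019towards}, $\capa^2_\theta(w)\le 1-c\norm{\mu([w])-\theta}^2$, which delivers the uniform gap for all $(\theta,u)$ with $\norm{\mu(v)-\ad^*(u)\theta}\ge\eps$ in one line, and does so for the $X_k$ estimate directly. Your qualitative version would suffice for the $Y_k$ conclusion alone, but it does not save the $X_k$ argument, which is the part that actually needs work.
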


We note that \cref{cor:intro convergence} readily implies the classical result that, in the semiclassical limit~$k\to\infty$, the distribution of the dimensions of the isotypical components in~$\Sym^k(V)$ converges weakly to the \emph{Duistermaat-Heckman measure}, i.e., the distribution of the intersection of $\mathcal O_{\mu(v)}$ with the positive Weyl chamber for random~$v$ chosen from the Haar measure on the unit sphere~\cite{heckman1982projections} (cf.~\cite{guillemin1982geometric,sjamaar1995holomorphic,meinrenken1996riemann,meinrenken1999singular,vergne1998quantization,okounkov1996brunn,christandl2014eigenvalue} for related results and generalizations).

To prove \cref{cor:intro convergence}, we note that \cref{thm:cap theta main,thm:Cap theta main} in fact imply that $X_k$, $Y_k$ obey \emph{large deviations principles} with rate functions~$\theta \mapsto -\!\log \capa^2_\theta(v)$ and $\theta \mapsto -\!\log \Capa^2_\theta(v)$, respectively (see \cref{subsec:large dev}).
Moreover, it is known that $\log \capa_\theta(v) = 0$ if and only if $\mu(v) = 0$;
in fact for every unit vector~$v$ and $\theta$ in the moment polytope we have
\begin{gather*}
  \capa^2_\theta(v) \leq 1 - c \norm{\mu(v) - \theta}^2
\end{gather*}
for some constant $c>0$ that depends only on the representation~\cite{burgisser2019towards}.
Combining this with \cref{thm:cap theta main,thm:Cap theta main} yields \cref{cor:intro convergence}.

For commutative groups, the above large deviations statement is equivalent to Sanov's theorem.
In the noncommutative setting, a large deviations result had previously been established by Keyl and Werner in the context of \emph{quantum state estimation} and \emph{spectrum estimation}~\cite{keyl2006quantum,keyl2005estimating}.
Here, $K = U(n)$ acts on square complex matrices $A \in \Mat_{n}(\CC)$ by left multiplication, in which case $\mu(A) = A A^\dagger/\norm{A}_F^2$.
Then the random variables~$X_k$ obey a large deviations principle with a very explicit rate function, which agrees with the quantum relative entropy when~$\mu(A)$ is diagonal.
As a corollary, the random variables~$Y_k$, which serve as an estimator for the \emph{spectrum} of~$\mu(A)$, satisfy a large deviations principle with rate equal to the relative entropy.
A closely related example is the large deviations theorem for the multiplicity distributions of tensor power representations due to Duffield~\cite{duffield1990large} (cf.~\cite{postnova2020multiplicities}).
Our results generalize these examples to arbitrary representations.
We discuss this in more detail in \cref{subsec:large dev}.

\subsection{Related work}
Closely related work has been done independently by Alonso Botero, Matthias Christandl, and P\'eter Vrana~\cite{christandl2020large}.
Both this work and~\cite{christandl2020large} were originally done without awareness of~\cite{zhang1994geometric}; we are thankful to Geordie Williamson and Jean-Beno\^it Bost for pointing out this reference to us after the fact.
The deduction of \cref{thm:main} from~\cite[Stmt.~1.6]{zhang1994geometric} requires some reformulations, so the relationship is not obvious at first sight.

\subsection{Organization of the paper}
In \cref{sec:basic}, we formally defined the capacity and establish basic properties of the capacity and the semiclassical limit.
We provide a proof for \cref{thm:main} in \cref{sec:proof}.
In \cref{sec:generalization}, we define the capacity for arbitrary points in the positive Weyl chamber and generalize our main result from invariants to highest weight vectors and irreducible representations (\cref{thm:cap theta main,thm:Cap theta main}).
In \cref{sec:applications}, we discuss applications and examples of our results to measures on the moment polytope, large deviations, and the Mathieu conjecture.

\section{Capacity and basic properties}\label{sec:basic}
In this section, we define the capacity and establish some basic properties.
Let~$K$ be a compact connected Lie group, $G$ its complexification (a connected complex reductive algebraic group),
and $\phi \colon G\to\GL(V)$ a finite-dimensional rational representation on a complex vector space~$V$.
We write $G \cdot v = \{ \phi(g) v : g \in G\}$ for the $G$-orbit of a vector $v\in V$, and denote the $G$-invariant subspace by $V^G = \{ v \in V : \phi(g) v = v \; \forall g \in G \}$.
Finally, we choose a norm~$\norm{\cdot}$ on~$V$ that is induced by a $K$-invariant Hermitian inner product~$\braket{\cdot,\cdot}$ (by convention, linear in the second argument).

\begin{dfn}[Capacity]\label{dfn:cap}
Given a vector $v\in V$, we define its \emph{capacity} by
\begin{align*}
  \capa(v) = \inf_{g\in G} \, \norm{\phi(g) v} = \min_{w \in \overline{G \cdot v}} \, \norm w.
\end{align*}
\end{dfn}

\noindent
This definition from~\cite{burgisser2019towards} generalizes the notions of matrix and operator capacity developed in~\cite{gurvits1998deflation,gurvits2004classical} as well as the polynomial capacity of~\cite{gurvits2006hyperbolic}.

The capacity quantifies the basic notions of stability from geometric invariant theory~\cite{mumford1994geometric}.
To see this, recall that a vector~$v$ is called \emph{unstable} if $\overline{G \cdot v} \ni 0$.
Recall that the Euclidean and Zariski closure of $G$-orbits coincide.
Equivalently, $v$ is unstable if $p(v)=p(0)$ for every $G$-invariant polynomial~$p$ on~$V$.
The set of all unstable vectors is called the \emph{null cone} of~$V$.
If $v$ is not unstable, it is called \emph{semistable}.
Clearly, $\capa(v)>0$ if and only if $v$ is semistable.

It is also interesting to discuss when $\capa(v) = \norm{v}$, which means that $v$ has minimal norm in its $G$-orbit.
This can also be characterized infinitesimally.
For this, let $\ga$ and $\ka$ denote the Lie algebras of $G$ and $K$, respectively, so that $\ga = \ka \op i\ka$ as real vector spaces.
Moreover, 
$K\backslash G \cong \exp(i\ka)$.
Let $\Phi\colon\ga\to \End(V)$ denote the infinitesimal action of the Lie algebra.
That is, $\exp(\Phi(X)) = \phi(\exp(X))$. 
Since the norm is $K$-invariant, we can consider the \emph{Kempf-Ness function} $K\backslash G \to \RR$, $Kg \mapsto \log \,\norm{\phi(g)v}$ for fixed $0\neq v\in V$~\cite{kempf1979length}.
Its differential at the identity coset depends only on the ray $[v] \in \PP(V)$ and can be computed as follows~\cite{ness1984stratification}:
\begin{align}\label{eq:moment map}
  \mu\colon \PP(V) \to (i\ka)^*, \quad [v] \mapsto \left( X \mapsto \frac{\braket{v, \Phi(X) v}}{\norm{v}^2} \right).
\end{align}
The $K$-equivariant map $\mu$ thus defined is called the \emph{moment map}.
It is a moment map for the $K$-action on $\PP(V)$ in the sense of symplectic geometry~\cite{ness1984stratification,kirwan1984cohomology}.
We will often write $\mu(v) = \mu([v])$ if $v$ is a unit vector.

Importantly, $\mu([v])=0$ if, and only if, $v$ has minimal norm in its $G$-orbit.
This result is part of the Kempf-Ness theorem, and it follows from the geodesic convexity of the Kempf-Ness function~\cite{kempf1979length}.
As a consequence, $\capa(v) = \norm v$ if, and only if, $\mu([v])=0$.
Recent work has made this statement quantitative~\cite{burgisser2019towards}, and we will come back to this in \cref{subsec:measures}.
It is known that the minimum in $\capa(v) = \min_{w\in\overline{G\cdot v}} \, \norm w$ is attained on a $K$-orbit in the \emph{unique} closed orbit in $\overline{G\cdot v}$.
This implies that the capacity is constant on $G$-orbit closures.


Now let~$\Pi_k$ denotes the orthogonal projection onto the $G$-invariant subspace of $V^{\ot k}$.
The latter is equipped with the canonical inner product and norm induced from~$V$.
It is easy to see that capacity is an upper bound to~$\norm{\Pi_k v^{\ot k}}^{\frac1k}$ for any~$k$.
We will latter show that, asymptotically, the two quantities coincide.

\begin{lem}\label{lem:weak duality}
Let~$v\in V$.
Then, $\capa(v) \geq \norm{\Pi_k v^{\ot k}}^{\frac1k}$ for all~$k\in\NN$.
\end{lem}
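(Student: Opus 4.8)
The plan is to exploit the fact that $\Pi_k$ is an orthogonal projection, hence a contraction, combined with $G$-invariance of the invariant subspace. First I would observe that $\norm{v^{\ot k}} = \norm{v}^k$, so the claim $\capa(v) \ge \norm{\Pi_k v^{\ot k}}^{1/k}$ is trivially true in the worst case where we only use $g = e$; the point is to run the same argument after replacing $v$ by $\phi(g)v$ for arbitrary $g \in G$. The key observation is that $\Pi_k$ commutes with the $G$-action on $V^{\ot k}$: since $\phi(g)^{\ot k}$ preserves the invariant subspace $(V^{\ot k})^G$ and hence also its orthogonal complement (this requires a small argument, see below), we have $\Pi_k \phi(g)^{\ot k} = \phi(g)^{\ot k}\Pi_k$ as operators on $V^{\ot k}$. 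Actually, even simpler: for any $w \in (V^{\ot k})^G$ we have $\phi(g)^{\ot k} w = w$, so $\braket{\phi(g)^{\ot k} v^{\ot k}, w} = \braket{v^{\ot k}, \phi(g^{-1})^{\ot k} w} = \braket{v^{\ot k}, w}$; since this holds for all $w$ in the invariant subspace, the orthogonal projections of $\phi(g)^{\ot k}v^{\ot k}$ and of $v^{\ot k}$ onto that subspace coincide. Hence $\Pi_k \bigl(\phi(g)v\bigr)^{\ot k} = \Pi_k v^{\ot k}$ for every $g \in G$.

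Given that, the estimate is immediate: since $\Pi_k$ is an orthogonal projection it has operator norm at most $1$, so for every $g \in G$,
\begin{align*}
  \norm{\Pi_k v^{\ot k}} = \norm{\Pi_k \bigl(\phi(g) v\bigr)^{\ot k}} \le \norm{\bigl(\phi(g)v\bigr)^{\ot k}} = \norm{\phi(g) v}^k.
\end{align*}
Taking the infimum over $g \in G$ gives $\norm{\Pi_k v^{\ot k}} \le \capa(v)^k$, and then taking $k$-th roots yields the claim. One should also note the degenerate case $\Pi_k v^{\ot k} = 0$, where the inequality holds trivially; this is harmless since $0 \le \capa(v)$ always.

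I do not expect any real obstacle here — the lemma is elementary. The only subtlety worth spelling out carefully is the invariance claim $\Pi_k (\phi(g)v)^{\ot k} = \Pi_k v^{\ot k}$, which as indicated follows purely from the fact that vectors in $(V^{\ot k})^G$ are fixed by $\phi(g)^{\ot k}$ together with the $K$-invariance (hence, by density/analyticity of matrix coefficients, $G$-invariance is not even needed — only $K$-invariance of the inner product is used, which is what makes $\Pi_k$ the adjoint-friendly orthogonal projection). If one prefers, one can instead cite that the canonical inner product on $V^{\ot k}$ is $K$-invariant and that $(V^{\ot k})^G = (V^{\ot k})^K$ for reductive $G$ with maximal compact $K$, so orthogonal projection onto it is $K$-equivariant and the averaging argument goes through. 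Either way the proof is two or three lines once the commutation is recorded.
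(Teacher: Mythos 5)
Your approach is the same as the paper's: for each $g\in G$ one has $\norm{\phi(g)v} = \norm{(\phi(g)v)^{\ot k}}^{1/k} \geq \norm{\Pi_k (\phi(g)v)^{\ot k}}^{1/k}$, combine this with the identity $\Pi_k (\phi(g)v)^{\ot k} = \Pi_k v^{\ot k}$, and take the infimum over $g$.

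However, your ``even simpler'' justification of that identity has a flaw. The step $\braket{\phi(g)^{\ot k} v^{\ot k}, w} = \braket{v^{\ot k}, \phi(g^{-1})^{\ot k} w}$ implicitly uses $\phi(g)^\dagger = \phi(g^{-1})$, i.e.\ that $\phi(g)$ is unitary. This holds only for $g\in K$; the inner product is merely $K$-invariant, not $G$-invariant. The conclusion you want is still true, but the correct mechanism is different: by the Cartan decomposition $g=u_1\exp(H)u_2$ with $u_1,u_2\in K$, $H\in i\ka$, one gets $\phi(g)^\dagger = \phi(u_2^{-1}\exp(H)u_1^{-1}) = \phi(g')$ for some $g'\in G$ which in general is \emph{not} $g^{-1}$. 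Since $g'\in G$, the operator $(\phi(g)^{\ot k})^\dagger=\phi(g')^{\ot k}$ still fixes every $w\in (V^{\ot k})^G$, and then your Riesz-representation-style argument goes through verbatim with $g'$ in place of $g^{-1}$. The first route you mention — that the orthogonal complement of $(V^{\ot k})^G$ is $G$-stable because it is $K$-stable (orthogonal complement of a $K$-subrepresentation with respect to a $K$-invariant inner product) and $G$-stability of a linear subspace is a Zariski-closed condition while $K$ is Zariski dense in $G$ — is the cleanest formulation of the fact both you and the paper need; the paper leaves it implicit in the phrase ``$\Pi_k$ projects onto the invariant subspace.''
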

\begin{proof}
  For every $g\in G$ and $k\in\NN$,
  \begin{align*}
    \norm{\phi(g) v}
  = \norm{\phi(g)^{\ot k} v^{\ot k}}^{\frac1k}
  \geq \norm{\Pi_k \phi(g)^{\ot k} v^{\ot k}}^{\frac1k}
  = \norm{\Pi_k v^{\ot k}}^{\frac1k}.
  \end{align*}
  The inequality is valid since the orthogonal projection $\Pi_k$ cannot increase the norm.
  The last step holds since $\Pi_k$ projects onto the invariant subspace.
  Now the claim follows by taking the infimum over $g\in G$. 
\end{proof}


\begin{lem}\label{lem:supmul}
Let $v\in V$.
Then, the sequence $\norm{\Pi_k v^{\ot k}}$ is super-multiplicative in $k\in\NN$, i.e.,
\begin{align*}
  \norm{\Pi_{k+l}(v^{\ot (k+l)})} \geq \norm{\Pi_k v^{\ot k}} \norm{\Pi_l (v^{\ot l})}
\end{align*}
for all $k, l\in\NN$.
As a consequence,
\begin{align}\label{eq:dual as sup}
  \limsup_{k\to\infty} \, \norm{\Pi_k v^{\ot k}}^{\frac1k} = \sup_{k\in\NN} \, \norm{\Pi_k v^{\ot k}}^{\frac1k}.
\end{align}
\end{lem}
\begin{proof}
Clearly, $(V^{\ot k})^G \otimes (V^{\ot l})^G \subseteq (V^{\ot (k+l)})^G$, so we have the operator inequality $\Pi_{k+l} \geq \Pi_k \ot \Pi_l$.
Thus we obtain
\begin{align*}
  \norm{\Pi_{k+l}(v^{\ot (k+l)})}^2
&= \braket{v^{\ot (k+l)}, \Pi_{k+l} (v^{\ot (k+l)})} \\
&\geq \braket{v^{\ot (k+l)}, (\Pi_k \ot \Pi_l) (v^{\ot (k+l)})}
= \norm{\Pi_k v^{\ot k}}^2 \norm{\Pi_l(v^{\ot l})}^2,
\end{align*}
which proves supermultiplicativity.

We now show \cref{eq:dual as sup}.
Define $p_k := \norm{\Pi_k v^{\ot k}}$ for $k\in \NN$.
It suffices to prove that
\begin{align*}
  \limsup_{k\to\infty} p_k^{\frac1k} \geq \sup_{k\in\NN} p_k^{\frac1k},
\end{align*}
since the other inequality is trivial.
We claim that in fact the following holds for any fixed $k\in\NN$:
\begin{align*}
  \lim_{n\to\infty} p_{nk}^{\frac1{nk}} \geq p_k^{\frac1k}
\end{align*}
To see this, we may assume that $p_k>0$, since otherwise the inequality holds trivially.
By the first part of the lemma, $(p_k)_{k\in\NN}$ is a super-multiplicative sequence.
Thus, if $p_k>0$ then $p_{nk}>0$ for all~$n\in\NN$, so it follows from Fekete's lemma applied to the super-additive sequence $\log p_k$ that
\begin{align*}
  \lim_{n\to\infty} p_{nk}^{\frac1{nk}} = \sup_{n\in\NN} p_{nk}^{\frac1{nk}} \geq p_k^{\frac1k}.
\end{align*}
\end{proof}

The quantity $\norm{\Pi_k v^{\ot k}}$ can also be interpreted in the language of invariant polynomials.
Indeed, note that~$v^{\ot k}$ is an element of the space of symmetric tensors, which is a subrepresentation of $V^{\ot k}$ and dual to the space~$\CC[V]_k$ of homogeneous polynomials on~$V$ of degree~$k$.
Thus,
\begin{align}\label{eq:proj vs polys}
  \norm{\Pi_k v^{\ot k}}^2 = \sum_i \abs{p_{k,i}(v)}^2,
\end{align}
where $\{p_{k,i}\}_i$ is an orthonormal basis of $\CC[V]_k$ with respect to the induced (Bombieri) inner product.
Equivalently, we may write \cref{eq:proj vs polys} as $\norm{\Pi_k v^{\ot k}}^2 = \EE\abs{P_k(v)}^2$, where $P_k$ is a complex standard Gaussian random homogeneous $G$-invariant polynomial of degree~$k$.
Thus:
\begin{align*}
  \limsup_{k\to\infty} \, \norm{\Pi_k v^{\ot k}}^{\frac1k}
= \limsup_{k\to\infty} \, \bigl( \EE\abs{P_k(v)}^2 \bigr)^{1/2k}
= \limsup_{k\to\infty} \, \bigl( \EE\abs{P_k(v)} \bigr)^{1/k},
\end{align*}
where the last inequality follows because $\dim \Sym^k V$ is polynomial in $k$.

\section{Proof of strong duality}\label{sec:proof}
In this section we prove \cref{thm:main}, which states that the limsup of $\norm{\Pi_k v^{\ot k}}^{\frac1k}$ is given by the capacity.
The following proposition contains our main computation.
The key idea is to use \cref{eq:pi_k as integral} to write
\begin{align}\label{eq:invariants via haar}
  \norm{\Pi_k v^{\ot k}}^2
= \braket{v^{\ot k}, \Pi_k v^{\ot k}}
= \int_K \braket{v^{\ot k}, \phi(u)^{\ot k} v^{\ot k}} \, \du
= \int_K \braket{v, \phi(u) v}^k \, \du,
\end{align}
where $\du$ denotes the Haar probability measure on~$K$, and to evaluate the dominant contribution to this integral for large~$k$.
Given a vector~$v\in V$, we write~$K_v$ for the $K$-stabilizer of~$v$ and, if $v\neq0$, we write $K_{[v]}$ for the $K$-stabilizer of~$[v]\in\PP(V)$.
Moreover, recall that $\mu$ denotes the moment map defined in \cref{eq:moment map}.

\begin{prp}\label{prp:main}
  Let $v\in V$ be a unit vector with $\mu(v)=0$.
  Then there exists an integer $m>0$ such that
  \begin{align}\label{eq:lim claim}
    \lim_{k \to \infty} (mk)^{(\dim K - \dim K_{[v]})/2} \, \norm{\Pi_{mk} v^{\ot mk}}^2
  > 0,
  \end{align}
  i.e., the limit exists and is positive.
\end{prp}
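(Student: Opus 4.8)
The plan is to evaluate the Haar integral $\norm{\Pi_{k} v^{\ot k}}^2 = \int_K \braket{v,\phi(u)v}^k \,\du$ by the Laplace method (stationary phase / method of steepest descent) around the critical set of the function $f(u) = \braket{v,\phi(u)v}$. Since $\mu(v)=0$ means $v$ is a minimal-norm vector in its orbit and $\norm{v}=1$, we have $\abs{f(u)} \le 1$ with equality exactly on the stabilizer-related set; more precisely, $\re f(u) \le 1$ with equality iff $\phi(u)v = v$, i.e. $u\in K_v$, while $\abs{f(u)}=1$ holds on the possibly-larger set $\{u : \phi(u)v \in \mathbb{T}v\} $, which maps onto $K_{[v]}$. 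The first step is to reduce to the case $K_v = K_{[v]}$: the phase $e^{ik\arg f(u)}$ on the set where $\abs{f}=1$ is governed by a character of $K_{[v]}/K_v$, and after passing to a suitable power $m$ (to kill the roots of unity coming from this finite-or-toral obstruction, exactly as Fekete's lemma forces in \cref{lem:supmul}) the contributions along that set add up coherently rather than cancelling. This is where the integer $m$ enters.

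Second, I would localize: write $K = \bigsqcup$ of a tubular neighborhood of the critical manifold $M := \{u : \phi(u)v = f(u)v,\ \abs{f(u)}=1\}$ (a closed submanifold of $K$, a union of cosets of $K_{[v]}$, hence of dimension $\dim K_{[v]}$) plus a complement on which $\abs{f(u)} \le 1-\delta$ for some $\delta>0$; the complement contributes $O((1-\delta)^k)$, which is exponentially negligible. Near $M$, choose slice coordinates transverse to $M$; the key computation is that the Hessian of $-\log f$ (equivalently of $u\mapsto \tfrac12(1-\re f(u))$ together with the imaginary part) in the transverse directions is nondegenerate. This nondegeneracy is the crux: it should follow from the fact that $\mu(v)=0$ is a genuine minimum of the Kempf--Ness function and, more to the point, that the second-order behavior of $u\mapsto \norm{\phi(\exp(X))v}$ and of the $K$-action are controlled by the isotropy representation of $K_{[v]}$ on the normal space to the orbit $K\cdot v$ — the transverse Hessian is, up to scalars, the Gram matrix of the vectors $\Phi(X)v$ for $X$ ranging over a complement of $\mathfrak{k}_{[v]}$ in $\mathfrak{k}$, which is positive definite precisely because those $X$ move $v$ to first order. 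Granting nondegeneracy, the standard multivariate Gaussian integral over the $(\dim K - \dim K_{[v]})$ transverse directions yields a factor $(2\pi/k)^{(\dim K - \dim K_{[v]})/2}$ times $(\det \text{Hessian})^{-1/2}$ times the volume of $M$, all positive; multiplying by $(mk)^{(\dim K - \dim K_{[v]})/2}$ and taking $k\to\infty$ along the arithmetic progression $mk$ gives a positive limit, which is exactly \cref{eq:lim claim}.

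The main obstacle I anticipate is making the Laplace-method estimate rigorous when $f$ is merely real-analytic (it is a matrix coefficient, so this is fine) but the critical set $M$ is a positive-dimensional manifold rather than a finite set of points, and when $\abs{f}$ rather than $\re f$ controls the decay — one must carefully separate the "amplitude" direction (where $\abs f$ drops) from the "phase" direction (where $\arg f$ oscillates, contributing the roots of unity that force the passage to $mk$). A clean way to handle this is to diagonalize: decompose $v^{\ot k}$ according to $K_{[v]}$-isotypic components inside a fixed irreducible piece, reduce the integral over $K$ to an integral over $K_{[v]}\backslash K$ against the correct Jacobian, and then apply Laplace on the homogeneous space $K_{[v]}\backslash K \cong K\cdot [v]$, whose tangent space at $[v]$ is exactly the normal space to the stabilizer. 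The nondegeneracy of the transverse Hessian, and the identification of its determinant with a quantity manifestly bounded away from $0$ and $\infty$, is the one computation I would expect to need to do with care; everything else is bookkeeping with tubular neighborhoods and the dominated/exponentially-small tail bound.
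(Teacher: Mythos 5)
Your proposal is essentially the same argument as the paper's: pass to the quotient $K/K_{[v]}$ (after choosing $m$ to absorb the finite phase group $K_{[v]}/K_v$), kill the complement of a neighborhood of the identity coset by the strict bound $\abs{f}<1$ there, and apply Laplace's method in geodesic normal coordinates on $\ma=\ka_{[v]}^\perp$, where the transverse quadratic form is exactly $Q(Y)=\norm{\Phi(Y)v}^2$, positive definite because the first-order condition $\mu(v)=0$ makes the linear term vanish and $\Phi(Y)v\ne0$ for $Y\in\ma\setminus\{0\}$. The paper makes the Laplace estimate rigorous via the scaling $Y=\sqrt{mk}\,X$ and dominated convergence, which is precisely the "bookkeeping" you anticipate; there is no genuinely different idea.
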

\begin{proof}
Since $\mu(v)=0$, the vector~$v$ is semistable.
This in turn implies that $K_{[v]}/K_v$ is necessarily finite~\cite[Lemma~2.2]{ness1984stratification}.
Thus, $K_{[v]}$ acts on the one-dimensional subspace~$\CC v$ by a finite subgroup of~$\U(1)$.
It follows that there is a positive integer $m>0$ such that $(\phi(u) v)^{\ot m} = v^{\ot m}$ for every~$u\in K_{[v]}$.
Using \cref{eq:invariants via haar}, we find that
\begin{align}\label{eq:integral quotient}
  \norm{\Pi_{mk} v^{\ot mk}}^2
= \int_K \braket{v, \phi(u) v}^{mk} \, \du
= \int_{K/K_{[v]}} \braket{v, \phi(u) v}^{mk} \, \duu,
\end{align}
where we denote by $\duu$ the unique left-$K$-invariant probability measure on $K/K_{[v]}$ (see, e.g.,~\cite[Theorem 8.36]{knapp2013lie}).
Note that $\abs{\braket{v,\phi(u)v}}\leq1$ for every $u\in K$, since $K$ acts by unitaries.
Moreover, equality holds if and only if $u\in K_{[v]}$, by equality condition for the Cauchy-Schwarz inequality.
The advantage of the right-hand side expression in \cref{eq:integral quotient} is that here this bound is only saturated at a single point, namely at the identity coset in $K/K_{[v]}$.

Since $K$ is compact, there exists an inner product on~$\ka$ that is invariant under the adjoint action of~$K$.
This inner product gives rise to a bi-invariant Riemannian metric and volume form on~$K$, which induces the Haar measure~$\du$ (provided the inner product is suitably normalized).
Moreover, $K/K_{[v]}$ is a normal homogeneous space.
Namely, if $\ka_{[v]}$ denote the Lie algebra of $K_{[v]}$ then we can identify its orthogonal complement~$\ma := \ka_{[v]}^\perp \subseteq \ka$ with the tangent space of $K/K_{[v]}$ at the identity coset.
The restriction of the inner product to $\ma$ then induces a left-invariant Riemannian metric and volume form on $K/K_{[v]}$, which induces the quotient measure~$\duu$.
Moreover, the projection $K\to K/K_{[v]}$ is a Riemannian submersion, and the Riemannian exponential map at the identity coset is given by $\Exp\colon \ma \to K/K_{[v]}$, $X \mapsto \overline{\exp(X)}$, where $\exp$ denotes the exponential map from $\ka$ to $K$~\cite[Theorem 3.65]{gallot1990riemannian}.

We can use this to localize the integral to a small neighborhood of the identity coset.
Let $\eps>0$ be small enough such that~$\Exp$ is a local diffeomorphism from an $\eps$-ball around the origin, denoted~$B_\eps(0)$, onto an open neighborhood of the identity coset, denoted~$U$.
We can then split the integral in \cref{eq:integral quotient} into two terms, one over the neighborhood~$U$ and one over its complement:
\begin{align}\label{eq:split}
  \int_{K/K_{[v]}} \braket{v, \phi(u) v}^{mk} \, \duu
= \int_U \braket{v, \phi(u) v}^{mk} \, \duu
+ \int_{U^c} \braket{v, \phi(u) v}^{mk} \, \duu.
\end{align}
It is easy to see that the second term in \cref{eq:split} does not contribute to \cref{eq:lim claim}.
Indeed, since~$U^c$ is compact and $\abs{\braket{v, \phi(u) v}}<1$ for $u\not\in K_{[v]}$, there exists a constant~$C<1$ such that $\abs{\braket{v, \phi(u) v}} \leq C$ for all~$u\in K^c$.
Then, $\abs{\int_{U^c} \braket{v, \phi(u) v}^{mk} \, \duu} \leq C^{mk}$, and it follows that
\begin{align}\label{eq:second term vanishes}
  \lim_{k\to\infty} (mk)^{(\dim K - \dim K_{[v]})/2} \int_{U^c} \braket{v, \phi(u) v}^{mk} \, \duu \to 0.
\end{align}

We now consider the first term in \cref{eq:split}.
We can write
\begin{align*}
  \int_U \braket{v, \phi(u) v}^{mk} \, \duu
= \int_{B_\eps(0)} \braket{v, \exp(\Phi(X)) v}^{mk} \, J(X) \, \dX,
\end{align*}
where $\dX$ denotes the Lebesgue measure induced by the inner product on~$\ma$, and $J(X)$ denotes the Jacobian of~$\Exp$, the Riemannian exponential map, at~$X\in\ma$.
At $X=0$, the differential of the~$\Exp$ is an isometry, so $J(0)=1$.
Moreover, $J(X)$ is a smooth function of~$X$.
By choosing~$\eps$ to be sufficiently small, we may therefore assume that $J(X)\leq2$ for all~$X\in B_\eps(0)$.
Next, make the change of variables $Y=\sqrt{mk} X$, so that
\begin{align}\label{eq:subs}
  (mk)^{(\dim K - \dim K_{[v]})/2} &\int_{B_\eps(0)} \braket{v, \exp(\Phi(X)) v}^{mk} \, J(X) \, \dX = \int_{\ma} h_k(Y) \, \dY,
\end{align}
where
\begin{align}\label{eq:subs integrand}
  h_k(Y)
= \braket{v, \exp\Bigl(\Phi\bigl(\tfrac Y{\sqrt{mk}}\bigr)\Bigr) v}^{mk} \, \id_{B_\eps(0)}\bigl(\tfrac Y{\sqrt{mk}}\bigr) \, J\bigl(\tfrac Y{\sqrt{mk}}\bigr).
\end{align}
Here, $\id_{B_\eps(0)}$ is the indicator function of the $\eps$-ball, and we used that $\dim \ma = \dim K - \dim K_{[v]}$.

We will compute the limit of \cref{eq:subs} as $k\to\infty$ by using the dominated convergence theorem.
To start, define $f_X(t) := \braket{v, \exp(\Phi(Xt)) v}$ for $X\in\ma$.
Then:
\begin{align*}
  f_X(0) &= \norm{v}^2 = 1, \\
  f_X'(0) &= \braket{v, \Phi(X) v} = \mu(v)(X) = 0, \\
  f_X''(0) &= \braket{v, \Phi(X)^2 v} = -\norm{\Phi(X) v}^2 = O(\norm X^2), \\
  \abs{f_X'''(t)} &= \abs{\braket{v, \Phi(X)^3 \exp(\Phi(Xt)) v}} \leq \norm{\Phi(X)^3} \leq \norm{\Phi(X)}^3 = O(\norm X^3)
\end{align*}
Here, we used that $v$ is a unit vector, that $\mu(v)=0$, and that $K$ acts unitarily; the constants hidden in the big O notation only depend on the operator norm of the Lie algebra representation.
By choosing~$\eps$ to be sufficiently small, we may assume that
$\abs{\braket{v, \exp(\Phi(X)) v} - 1} \leq 1/2$ for all~$X\in B_\eps(0)$.
Then it follows from the Taylor expansion of $\log f_X$, where $\log$ denotes the principal branch of the logarithm, that
\begin{align}\label{eq:uniform estimate}
  \braket{v, \exp(\Phi(X)) v}
= f_X(1)
= e^{\log f_X(1)}
= e^{-\frac12\norm{\Phi(X) v}^2 + O(\norm{X}^3)}
\end{align}
for all~$X\in B_\eps(0)$.

We now determine the pointwise limit of the integrand \cref{eq:subs integrand}.
For any fixed~$Y$ and sufficiently large~$k$, we can apply \cref{eq:uniform estimate} with $X=\frac Y{\sqrt{mk}}$, so we  obtain
\begin{align*}
  \lim_{k\to\infty} \braket{v, \exp\Bigl(\Phi\bigl(\tfrac Y{\sqrt{mk}}\bigr)\Bigr) v}^{mk}
= \lim_{k\to\infty} e^{-\frac12\norm{\Phi(Y) v}^2 + \frac{O(\norm{Y}^3)}{\sqrt k}}
= e^{-\frac12\norm{\Phi(Y) v}^2}.
\end{align*}
Since both $\id_{B_\eps(0)}\bigl(\tfrac Y{\sqrt k}\bigr)$ and $J\bigl(\tfrac Y{\sqrt k}\bigr)$ converge to~$1$ as $k\to\infty$, it follows that
\begin{align}\label{eq:pointwise limit}
  \lim_{k\to\infty} h_k(Y) = h(Y) := e^{-\frac12\norm{\Phi(Y) v}^2}.
\end{align}
is the pointwise limit of the functions $h_k(Y)$.
Note that $Q(Y) := \norm{\Phi(Y) v}^2$ is a positive definite quadratic form on $\ma$.
This is because $Q(Y)=0$ implies that $Y\in\ka_{[v]} = \ma^\perp$.
Thus, the integral of \cref{eq:pointwise limit} is a finite Gaussian integral, namely
\begin{align}\label{eq:pointwise integral}
  \int_{\ma} h(Y) \, \dY
= \int_{\ma} e^{-\frac12 Q(Y)} \, \dY
= \sqrt{\frac{(2\pi)^{\dim \ma}}{\det(Q)}}
> 0,
\end{align}
where $\det(Q)$ denotes the determinant of the positive definite matrix that corresponds to the quadratic form~$Q$.

We can similarly show that the integrand \cref{eq:subs integrand} is upper-bounded in absolute value by an integrable function that is independent of $k$.
Indeed, since $Q$ is positive definite, it follows from \cref{eq:uniform estimate} that we can choose~$\eps$ small enough such that
\begin{align}\label{eq:bound in ball}
  \abs{\braket{v, \exp(\Phi(X)) v}}
\leq e^{-\frac14 Q(X)}
\end{align}
for all $X\in B_\eps(0)$.
Then,
\begin{align*}
  \abs{h_k(Y)}
= \abs[\Big]{\braket{v, \exp\Bigl(\Phi\bigl(\tfrac Y{\sqrt{mk}}\bigr)\Bigr) v}^{mk} \, \id_{B_\eps(0)}\bigl(\tfrac Y{\sqrt{mk}}\bigr) \, J\bigl(\tfrac Y{\sqrt{mk}}\bigr)}
\leq 2 e^{-\frac{mk}4 Q\bigl(\tfrac Y{\sqrt{mk}}\bigr)}
= 2 e^{-\frac14 Q(Y)},
\end{align*}
where we used \cref{eq:bound in ball} and that $\abs{J(X)}\leq2$ for $X\in B_\eps(0)$.
The right-hand side function is integrable, again because $Q$ is positive definite.

Thus, the dominated convergence theorem is applicable and shows that
\begin{align*}
  \lim_{k\to\infty} \int_{\ma} h_k(Y) \, \dY
= \int_{\ma} h(Y) \, \dY
= \sqrt{\frac{(2\pi)^{\dim \ma}}{\det(Q)}}
> 0
\end{align*}
using \cref{eq:pointwise limit,eq:pointwise integral}.
In view of \cref{eq:subs,eq:second term vanishes,eq:split,eq:integral quotient}, we have proved the proposition.
\end{proof}

We now prove \cref{thm:main}.

\begin{proof}[Proof of \cref{thm:main}]
It remains to show that
\begin{align*}
  \limsup_{k\to\infty} \, \norm{\Pi_k v^{\ot k}}^{\frac1k} \geq \capa(v),
\end{align*}
since we already showed the easy converse direction (\cref{lem:weak duality}).
For this, let $w$ be a vector of minimal norm in~$\overline{G \cdot v}$, so that $\capa(v) = \norm{w}$, while
\begin{align*}
  \limsup_{k\to\infty} \, \norm{\Pi_k v^{\ot k}}^{\frac1k} = \limsup_{k\to\infty} \, \norm{\Pi_k w^{\ot k}}^{\frac1k},
\end{align*}
since $v\mapsto \norm{\Pi_k v^{\ot k}}^{\frac1k}$ is constant on $G$-orbit closures and the same is true for the limsup.
We may assume that~$w\neq 0$, since otherwise there is nothing to prove.
Then, $\mu([w])=0$, since $w$ is in particular a nonzero vector of minimal norm in its own $G$-orbit.
Thus it suffices to prove
\begin{align*}
  \limsup_{k\to\infty} \, \norm{\Pi_k w^{\ot k}}^{\frac1k} \geq \norm w.
\end{align*}
Since both the left-hand side and the right-hand side are homogeneous of degree one, we may further assume that~$\norm{w}=1$.
Then it suffices to exhibit a subsequence of $k$'s such that $\norm{\Pi_k w^{\ot k}} = \Omega(1/\!\poly(k))$.
This is achieved by \cref{prp:main}.
\end{proof}

\section{Generalization to highest weights and isotypical components}\label{sec:generalization}
There are two natural generalizations of the notion of an invariant vector: highest weight vectors and irreducible representations.
Accordingly, in this section we will consider \emph{two} generalizations of the capacity and show that they correspond precisely to the asymptotics of these two representation-theoretic notions, respectively (\cref{thm:cap theta main,thm:Cap theta main}).

Let $K \subseteq T$ be a maximal torus, $B \subseteq G$ a Borel subgroup containing $T$, and $N \subseteq B$ its maximal unipotent subgroup.
Let $\ta$ denote the Lie algebra of~$T$.
Let $\Lambda \subseteq (i\ta)^*$ the weight lattice of~$T$ 
and $\Lambda_+ \subseteq \Lambda$ the semigroup of dominant weights with respect to $B$.
The positive Weyl chamber $C_+$ is the convex polyhedral cone spanned by $\Lambda_+$.
Let $\ad^*$ denote the coadjoint action of $K$ on~$(i\ka)^*$.
It is well-known that $C_+$ is a cross-section for this action, that is, each coadjoint orbit intersects $C_+$ in a single point.
Here we consider $C_+ \subseteq (i\ka)^*$ by using an inner product that is invariant under the adjoint action of~$K$ on $i\ka$.
For each~$\lambda\in\Lambda_+$, denote by~$\phi_\lambda\colon G\to\GL(V_\lambda)$ the irreducible representation of~$G$ with highest weight~$\lambda$.
We equip~$V_\lambda$ with a $K$-invariant inner product, also denoted~$\braket{\cdot,\cdot}$, and fix a highest weight vector~$v_\lambda\in V_\lambda$ of unit norm (which is unique up to phase).

Finally, let $A = \exp(i\ta)$, $B^- \subseteq B$ the Borel subgroup opposite to~$B$ and $N^- \subseteq B^-$ its maximal unipotent subgroup.
Then we have an Iwasawa decomposition $G=KAN^-$, where $A = \exp(i\ta)$ and we use the opposite unipotent subgroup $N^-$ for reasons that will become clear shortly.
Denote by $a\colon G\to A$ the function that assigns to a group element its component in~$A$ acccording to the Iwasawa decomposition.

\begin{dfn}[Absolute character]\label{dfn:abs-char}
For $\theta\in C_+$, define the \emph{absolute character} as the function
\begin{align*}
  \abs{\chi_{-\theta}}\colon G\to \RR_+, \quad g \mapsto \abs{\chi_{-\theta}(g)} := e^{-\theta(\log a(g))}.
\end{align*}
\end{dfn}

\noindent
In general, we have the following equivariance property:
For all $g\in G$ and $b\in B^-$,
\begin{align}\label{eq:abs char cov}
  \abs{\chi_{-\theta}(gb)} = \abs{\chi_{-\theta}(g)} \, \abs{\chi_{-\theta}(b)},
\end{align}
so in particular $\abs{\chi_{-\theta}}$ is a character of~$B^-$.

The absolute character has the following representation theoretic interpretation.
Let $\lambda\in\Lambda_+$ be a dominant weight and $v_\lambda\in V_\lambda$ a unit-norm highest weight vector, then
\begin{align}\label{eq:abs char vs lwv}
  \abs{\chi_{-\lambda}(g)} = \norm{\phi_\lambda^*(g) v_\lambda^\dagger}
\end{align}
for all $g\in G$, where $\phi_\lambda^*$ denotes the dual representation.
Indeed, $v_\lambda^\dagger = \braket{v_\lambda,\cdot} \in V_\lambda^*$ is a lowest weight vector in the dual representation, of weight $-\lambda$. Moreover, we have the formula
\begin{align}\label{eq:unitary equivariance}
  \abs{\chi_{-\theta}(g)} = \abs{\chi_{-\theta}(gu)}
\end{align}
for all $\theta\in C_+$, $g\in G$, and $u\in K_\theta$.
This follows because both $v_\lambda$ and $v_\lambda^\dagger$ are invariant under $K_\lambda$, the stabilizer of $\lambda$ under the coadjoint action of~$K$.
This invariance extends directly to rational~$\theta=\frac\lambda\ell$ and, by continuity, to arbitrary~$\theta$ in the positive Weyl chamber.

\bigskip

We now give our first generalization of the capacity.

\begin{dfn}[$\theta$-capacity]
Given $\theta\in C_+$, we define the \emph{$\theta$-capacity} of~$v\in V$ by
\begin{align*}
  \capa_\theta(v)
= \inf_{g\in G} \, \abs{\chi_{-\theta}(g)} \, \norm{\phi(g) v}
= \inf_{H\in i\ta, n \in N^-} \, e^{-\theta(H)} \, \norm{e^{\Phi(H)} \phi(n) v}.
\end{align*}
We also denote the supremum of the $\theta$-capacity over $K$-orbits by
\begin{align*}
  \Capa_\theta(v)
= \sup_{x\in\mathcal O_\theta} \capa_x(v)
= \sup_{u\in K} \capa_\theta(\phi(u)v).
\end{align*}
\end{dfn}

\noindent This definition is very similar to the one in~\cite{burgisser2019towards}, except that here we choose to work with lowest weight vectors rather than highest weight vectors of the dual representation.

The $\theta$-capacity is semi-invariant under the opposite Borel subgroup: For all $b\in B^-$,
$\capa_\theta(\phi(b^{-1}) v) = \abs{\chi_{-\theta}(b)} \capa_\theta(v)$,
as follows from \cref{eq:abs char cov}. 
Moreover, $\capa_\theta(v)$ is log-concave as a function of~$\theta\in C_+$.

For $\theta=0$, we recover the definition of the capacity from \cref{dfn:cap}.
In fact, $\capa_\theta(v)$ is for any rational~$\theta$ just an ordinary capacity in disguise:
For all $\lambda\in\Lambda_+$ and $\ell\in\NN$, \cref{eq:abs char vs lwv} implies that
\begin{align}\label{eq:cap theta as cap}
  \capa_{\frac\lambda\ell}(v)
= \left( \inf_{g\in G} \, \abs{\chi_{-\lambda}(g)} \, \norm{\phi(g) v}^\ell \right)^{\frac1\ell}
= \capa(v^{\ot\ell} \ot v_\lambda^\dagger)^{\frac1\ell}
\end{align}
where the right-hand side capacity is computed in the representation $V^{\ot \ell} \ot V_\lambda^*$.
Thus, \cref{thm:main} immediately yields an interpretation in terms of the asymptotic growth of invariants.

We now prove \cref{thm:cap theta main}, which states that the $\theta$-capacity equals the limsup of $\norm{\Pi^+_{k,k\theta} v^{\ot k}}^{1/k}$, i.e., measures the growth of the projection of $v^{\ot k}$ onto the subspace of highest weight vectors.

\begin{proof}[Proof of \cref{thm:cap theta main}]
Let $\ell\in\NN$ be the smallest number such that $\lambda := \ell\theta \in \Lambda_+$.
From \cref{eq:cap theta as cap,thm:main}, we find that
\begin{align}
  \capa_\theta(v)
= \capa(v^{\ot\ell} \ot v_\lambda^\dagger)^{\frac1\ell}
= \limsup_{k\to\infty} \, \norm{\Pi'_k \bigl((v^{\ot\ell} \ot v_\lambda^\dagger)^{\ot k}\bigr)}^{\frac1{k\ell}},\label{eq:shift}
\end{align}
with $\Pi'_k$ the orthogonal projection onto the $G$-invariant subspace of~$W^{\ot k}$, where $W = V^{\ot\ell} \ot V_\lambda^*$.
Since $v_\lambda^{\ot k}$ is a highest weight vector of weight~$k\lambda$, it is the highest weight vector of a copy of $V_{k \lambda}$ inside $V_\lambda^{\ot k}$. As $\Pi_k'$ commutes with orthogonal projections to subrepresentations, the norm in the right-hand side of \cref{eq:shift} can equivalently be written as
\begin{align*}
  \norm{\Pi'_k \bigl((v^{\ot\ell} \ot v_\lambda^\dagger)^{\ot k}\bigr)}
= \norm{\Pi''_k (v^{\ot k\ell} \ot v_{k\lambda}^\dagger)},
\end{align*}
with $\Pi''_k$ the orthogonal projection onto the $G$-invariant subspace of $V^{\ot k\ell} \ot V_{k\lambda}^*$.
We claim that
\begin{align}\label{eq:claim}
  \norm{\Pi''_k (v^{\ot k\ell} \ot v_{k\lambda}^\dagger)} = \frac1{\sqrt{d_{k\lambda}}} \norm{\Pi^+_{k\ell,k\lambda} (v^{\ot k\ell})},
\end{align}
where $d_\nu := \dim V_\nu$.
This will complete the proof, because $d_{k\lambda}$ grows only polynomially with~$k$ by the Weyl dimension formula, hence
\begin{align*}
  \capa_\theta(v)
= \limsup_{k\to\infty} \left( \frac1{\sqrt{d_{k\lambda}}} \norm{\Pi^+_{k\ell,k\lambda} (v^{\ot k\ell})} \right)^{\frac1{k\ell}}
= \limsup_{k\to\infty} \, \norm{\Pi^+_{k\ell,k\lambda} (v^{\ot k\ell})}^{\frac1{k\ell}}.
\end{align*}
We now proceed with the proof of \cref{eq:claim}. By complete reducibility, it suffices to show that, for any two highest weights $\alpha,\beta\in\Lambda_+$, $w\in V_\alpha$, and $\Pi$ the projection onto the $G$-invariant subspace of $V_\alpha \ot V_\beta^*$, we have
\begin{align}\label{eq:schur}
  \norm{\Pi(w \ot v_\beta^\dagger)} = \begin{cases}
    0 & \text{ if } \alpha\neq\beta, \\
    \frac1{\sqrt{d_\beta}} \abs{\braket{v_\beta,w}} & \text{ if } \alpha=\beta,
  \end{cases}
\end{align}
where $v_\beta$ denotes a unit-norm highest weight vector in $V_\beta$.
For this, recall that $\dim (V_\alpha \ot V_\beta^*)^G = \delta_{\alpha,\beta}$ by Schur's lemma.
If $\alpha\neq\beta$, this means that $\Pi=0$, so the first statement is clear.
If $\alpha=\beta$, then the one-dimensional $G$-invariant subspace is spanned by the normalized identity operator~$I/\sqrt{d_\beta}$, which is a unit vector in $V_\beta \ot V_\beta^* \cong L(V_\beta)$ (the induced inner product is the Hilbert-Schmidt inner product).
Thus,
\begin{align*}
  \norm{\Pi(w \ot v_\beta^\dagger)} = \frac1{\sqrt{d_\beta}} \abs{\tr(w v_\beta^\dagger)} = \frac1{\sqrt{d_\beta}} \abs{\braket{v_\beta,w}},
\end{align*}
which establishes the second statement in \cref{eq:schur}, and thereby \cref{eq:claim}.
\end{proof}

Next, we generalize \cref{lem:supmul}.

\begin{lem}\label{lem:supmul plus}
Let $v\in V$, $k,l\in\NN$, and $\lambda,\nu\in\Lambda_+$. Then,
\begin{align}\label{eq:generalized supmul}
  \norm{\Pi_{k+l,\lambda+\nu}(v^{\ot (k+l)})} \geq \norm{\Pi^+_{k,\lambda} (v^{\ot k})} \norm{\Pi^+_{l,\nu} (v^{\ot l})}.
\end{align}
As a consequence, the sequence $\norm{\Pi^+_{k,k\theta} (v^{\ot k})}$ is super-multiplicative in $k\in\NN$ for any $\theta\in\QQ_+\Lambda_+$, and
\begin{align}\label{eq:dual theta as sup}
  \limsup_{k\to\infty} \, \norm{\Pi^+_{k,k\theta} (v^{\ot k})}^{\frac1k} = \sup_{k\in\NN} \, \norm{\Pi^+_{k,k\theta} (v^{\ot k})}^{\frac1k}.
\end{align}
\end{lem}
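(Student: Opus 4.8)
The plan is to mimic the proof of \cref{lem:supmul}, with the only new ingredient being that highest weight spaces behave multiplicatively under the tensor product in the appropriate graded sense. Concretely, I would first establish the operator inequality underlying \cref{eq:generalized supmul}: the subspace $\HWV_\lambda(V^{\ot k}) \ot \HWV_\nu(V^{\ot l})$, where $\HWV_\lambda(W)$ denotes the space of highest weight vectors of weight $\lambda$ in $W$, is contained in $\HWV_{\lambda+\nu}(V^{\ot(k+l)})$. This is because a tensor product of two $N$-invariant vectors is again $N$-invariant, and its $T$-weight is the sum of the two weights; hence $\Pi^+_{k+l,\lambda+\nu} \geq \Pi^+_{k,\lambda} \ot \Pi^+_{l,\nu}$ as operators on $V^{\ot(k+l)} = V^{\ot k}\ot V^{\ot l}$, since the left side projects onto a larger subspace. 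Evaluating the quadratic form of both sides on $v^{\ot(k+l)} = v^{\ot k}\ot v^{\ot l}$ gives
\begin{align*}
  \norm{\Pi_{k+l,\lambda+\nu}^+(v^{\ot(k+l)})}^2
  &= \braket{v^{\ot(k+l)}, \Pi^+_{k+l,\lambda+\nu}\, v^{\ot(k+l)}} \\
  &\geq \braket{v^{\ot(k+l)}, (\Pi^+_{k,\lambda}\ot\Pi^+_{l,\nu})\, v^{\ot(k+l)}}
  = \norm{\Pi^+_{k,\lambda}(v^{\ot k})}^2\,\norm{\Pi^+_{l,\nu}(v^{\ot l})}^2,
\end{align*}
which is \cref{eq:generalized supmul} after taking square roots.

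For the consequences, fix $\theta\in\QQ_+\Lambda_+$ and let $\ell$ be minimal with $\lambda := \ell\theta\in\Lambda_+$. Setting $\lambda,\nu$ in \cref{eq:generalized supmul} to suitable integer multiples of $\lambda$ shows that the subsequence $\bigl(\norm{\Pi^+_{n\ell,n\lambda}(v^{\ot n\ell})}\bigr)_{n\in\NN}$ is super-multiplicative; more carefully, writing $p_k := \norm{\Pi^+_{k,k\theta}(v^{\ot k})}$ (which we interpret as $0$ whenever $k\theta\notin\Lambda_+$, consistent with the convention $\Pi^+_{k,\lambda}:=0$ for non-dominant $\lambda$), \cref{eq:generalized supmul} gives $p_{k+l}\geq p_k p_l$ for all $k,l$, since whenever $k\theta$ and $l\theta$ are both dominant weights so is $(k+l)\theta$, and otherwise the right-hand side is zero. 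Thus $(p_k)$ is super-multiplicative, and \cref{eq:dual theta as sup} follows by exactly the Fekete-type argument already carried out in the proof of \cref{lem:supmul}: for any fixed $k$ with $p_k>0$ one has $\lim_{n\to\infty} p_{nk}^{1/nk} = \sup_n p_{nk}^{1/nk}\geq p_k^{1/k}$, so $\limsup_k p_k^{1/k}\geq\sup_k p_k^{1/k}$, and the reverse inequality is trivial.

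The only genuinely non-routine point is the weight-additivity claim $\HWV_\lambda\ot\HWV_\nu\subseteq\HWV_{\lambda+\nu}$, and even this is standard: if $x\in V^{\ot k}$ and $y\in V^{\ot l}$ satisfy $\Phi(n)x=0$ and $\Phi(n)y=0$ for all $n$ in the Lie algebra of $N$, and $\Phi(H)x=\lambda(H)x$, $\Phi(H)y=\nu(H)y$ for $H\in\ta$, then by the Leibniz rule for the diagonal action on the tensor product, $x\ot y$ is annihilated by $\Phi(n)$ and is a weight vector of weight $\lambda+\nu$; so I do not anticipate any real obstacle. I would present the lemma's proof in one compact paragraph deriving \cref{eq:generalized supmul} from this operator inequality, and then a second paragraph deducing super-multiplicativity of $p_k$ and invoking Fekete's lemma precisely as in \cref{lem:supmul}, being careful to handle the degenerate weights via the stated $0$-convention so that the inequality $p_{k+l}\geq p_k p_l$ holds unconditionally.
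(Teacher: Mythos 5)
Your proposal is correct and follows essentially the same route as the paper: establish the operator inequality $\Pi^+_{k+l,\lambda+\nu}\geq\Pi^+_{k,\lambda}\ot\Pi^+_{l,\nu}$ from the fact that tensor products of highest weight vectors are highest weight vectors of the summed weight, then evaluate on $v^{\ot(k+l)}$ and appeal to the Fekete argument exactly as in \cref{lem:supmul}. You also correctly note (implicitly, by using $\Pi^+$ on the left) that the stronger inequality with $\Pi^+_{k+l,\lambda+\nu}$ in place of $\Pi_{k+l,\lambda+\nu}$ is what is actually needed for the supermultiplicativity of $\norm{\Pi^+_{k,k\theta}(v^{\ot k})}$, which is what the paper's proof establishes too.
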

\begin{proof}
The tensor product of two highest weight vectors with weight~$\lambda$ and~$\nu$, respectively, is a highest weight vector of weight~$\lambda+\nu$.
This shows the operator inequality $\Pi^+_{k+l,\lambda+\nu} \geq \Pi^+_{k,\lambda} \ot \Pi^+_{l,\nu}$.
Now \cref{eq:generalized supmul} and the remaining statements follow just like in the proof of \cref{lem:supmul}.
\end{proof}

We now prove \cref{thm:Cap theta main}, which shows that an easy modification of the $\theta$-capacity computes the limsup of $\norm{\Pi_{k,k\theta} v^{\ot k}}^{1/k}$, the asymptotic growth of the projection of $v^{\ot k}$ onto the isotypical component (rather than the subspace of highest weight vectors).
Namely, we only need to replace $\capa_\theta(v)$ by $\Capa_\theta(v)$, its supremum over the $K$-orbit of~$v$.

\begin{proof}[Proof of \cref{thm:Cap theta main}]
In view of \cref{thm:cap theta main}, we need to show that
\begin{align*}
  \sup_{u\in K} \limsup_{k\to\infty} \, \norm{\Pi^+_{k,k\theta}((\phi(u)v)^{\ot k})}^{\frac1k}
= \limsup_{k\to\infty} \, \norm{\Pi_{k,k\theta}(v^{\ot k})}^{\frac1k}.
\end{align*}
Again, one inequality is easy.
Since $\Pi^+_{k,\lambda} \leq \Pi_{k,\lambda}$ for every $k\in\NN$ and $\lambda\in\Lambda_+$, we have
\begin{align*}
\sup_{u\in K} \limsup_{k\to\infty} \, \norm{\Pi^+_{k,k\theta}((\phi(u)v)^{\ot k})}^{\frac1k}
\leq \sup_{u\in K} \limsup_{k\to\infty} \, \norm{\Pi_{k,k\theta}((\phi(u)v)^{\ot k})}^{\frac1k}
= \limsup_{k\to\infty} \, \norm{\Pi_{k,k\theta}(v^{\ot k})}^{\frac1k},
\end{align*}
using $K$-invariance in the last step.

Next, we must show the reverse inequality.
For this, note that we can write
\begin{align}\label{eq:int-proj}
  \Pi_{k,\lambda} = d_{\lambda} \int_K \phi(u^{-1})^{\ot k} \Pi^+_{k,\lambda} \phi(u)^{\ot k} \, \du
\end{align}
where $d_\lambda := \dim V_{\lambda}$ if $\lambda\in V_\lambda$, and $d_\lambda:=0$ otherwise.
Thus,
\begin{align*}
  \norm{\Pi_{k,k\theta}(v^{\ot k})}
\leq d_{k\theta} \int_K \norm{\phi(u^{-1})^{\ot k} \Pi^+_{k,k\theta} (\phi(u) v)^{\ot k}} \, \du
= d_{k\theta} \int_K \norm{\Pi^+_{k,k\theta} (\phi(u) v)^{\ot k}} \, \du.
\end{align*}
It follows that for every $k\in\NN$ there exists $u_k\in K$ such that
\begin{align*}
  \norm{\Pi_{k,k\theta}(v^{\ot k})} \leq d_{k\theta} \norm{\Pi^+_{k,k\theta} (\phi(u_k) v)^{\ot k}}.
\end{align*}
Since $d_{k\theta}$ grows only polynomially with~$k$ by the Weyl dimension formula, we find that
\begin{align*}
\limsup_{k\to\infty} \, \norm{\Pi_{k,k\theta}(v^{\ot k})}^{\frac1k}
&\leq \limsup_{k\to\infty} d_{k\theta}^{\frac1k} \, \norm{\Pi^+_{k,k\theta} (\phi(u_k) v)^{\ot k}}^{\frac1k}
= \limsup_{k\to\infty} \, \norm{\Pi^+_{k,k\theta} (\phi(u_k) v)^{\ot k}}^{\frac1k} \\
&\leq \sup_{u\in K} \sup_{k\in\NN} \, \norm{\Pi^+_{k,k\theta} (\phi(u) v)^{\ot k}}^{\frac1k}
= \sup_{u\in K} \limsup_{k\to\infty} \, \norm{\Pi^+_{k,k\theta} (\phi(u) v)^{\ot k}}^{\frac1k},
\end{align*}
where the last step is due to \cref{eq:dual theta as sup}.
This concludes the proof.
\end{proof}

\section{Applications and examples}\label{sec:applications}

In this section we describe more carefully some of the connections discussed in \cref{sec:intro}.

\subsection{Capacity and moment map}\label{subsec:moment map}
Here we expand upon \cref{subsec:moment intro}; for more background see~\cite{kempf1979length,ness1984stratification,kirwan1984cohomology,guillemin2006convexity,burgisser2019towards}.
Recall from \cref{sec:basic} that $\capa(v) = \norm v$ iff $\mu([v])=0$.
Moreover, $\capa(v) > 0$ if and only if there exists $w\in\overline{G \cdot v}$ such that~$\mu([w])=0$, or if and only if there exists $G$-invariant polynomial $p$ such that $p(v)\neq p(0)$.

We can similarly characterize other points in the image of the moment map.
For this, it is useful to extend the definition of the $\theta$-capacity to arbitrary points~$x=\ad^*(u)\theta$ in~$(i\ka)^*$ by
\begin{align}\label{eq:def cap x}
  \capa_x(v) := \capa_\theta(\phi(u^{-1})v).
\end{align}
This is well-defined by \cref{eq:unitary equivariance}.
Now one can similarly show that $\capa_x(v) = \norm{v}$ if and only if~$\mu(v) = \theta$.
Moreover, $\capa_\theta(v) > 0$ if and only if there exists $w \in \overline{B_- \cdot [v]}$ such that~$\mu([w]) = \theta$.

The image under the moment map of the $G$-orbit closure of~$[v]$ in~$\PP(V)$ is by $K$-equivariance closed under the coadjoint action, so fully characterized by the \emph{moment polytope}:
\begin{align*}
  \Delta(v) := \mu\bigl(\overline{G \cdot [v]}\bigr) \cap C_+
  = \bigl\{ \theta \in C_+ \;\big\vert\; \mu([w]) \in \mathcal O_\theta, \, [w] \in \overline{G \cdot [v]} \bigr\},
\end{align*}
where $\mathcal O_\theta = \ad^*(K)\theta$ denotes the coadjoint orbit through~$\theta$.
By Mumford's theorem, $\Delta(v)$ is a convex polytope with rational vertices~\cite{ness1984stratification,kirwan1984cohomology,brion1987image}.
From the preceding discussion it is clear 
that $\Capa_\theta(v) = \norm v$ if and only if $\mu([v]) \in \mathcal O_\theta$, and $\Capa_\theta(v) > 0$ if and only if $\theta\in\Delta(v)$.
Thus, the support of the function $\theta \mapsto \Capa_\theta(v)$ is precisely the moment polytope.
Now we see that \cref{thm:Cap theta main} implies the well-known result that the rational points of the moment polytope are determined by the highest weights that occur in the homogeneous coordinate ring of the orbit closure.
That is, for~$\theta\in\QQ_+\Lambda_+$, we have that~$\theta \in \Delta(v)$ if and only if~$V^*_{k\theta}$ occurs in~$\CC[\overline{G \cdot [v]}]_k$, the degree-$k$ part of the homogeneous coordinate ring of the orbit closure, for some $k\in\NN$.

\subsection{Measures and multiplicities}\label{subsec:measures}
It is natural to study the growth of multiplicities in the homogeneous coordinate ring of a projective $G$-variety.
Heckman studied this question for the restriction of coadjoint orbits~\cite{heckman1982projections}; see~\cite{guillemin1982geometric,sjamaar1995holomorphic,meinrenken1996riemann,meinrenken1999singular,vergne1998quantization,okounkov1996brunn} for related results and generalizations.
For $\PP(V)$, his result can be stated as follows.
Let $d=\dim V$, $d_\lambda=\dim V_\lambda$, and denote by $m_{k,\lambda}$ denotes the multiplicity of~$V_\lambda$ in~$\Sym^k(V)$.
Then the sequence of probability measures
\begin{align}\label{eq:nu heckman}
  \mu_{V,k} = \frac1{\binom{d+k-1}k} \sum_{\lambda\in\Lambda_+} d_\lambda m_{k,\lambda} \, \delta_{\frac\lambda k}
\end{align}
converges weakly to the so-called Duistermaat-Heckman measure, defined as the pushforward of a Haar random vector in~$\PP(V)$ along the moment map and further onto the positive Weyl chamber.
This result is often stated in a slightly different but equivalent way, leaving out the dimensions~$d_\lambda$.

We can refine Heckman's result by replacing the multiplicities by projecting tensor powers of a fixed vector onto subspaces of highest weight vectors.
By \cref{eq:int-proj},
\begin{align}\label{eq:povm}
  \sum_{\lambda\in\Lambda_+} d_{\lambda} \int_K \phi(u)^{\ot k} \Pi^+_{k,\lambda} \phi(u^{-1})^{\ot k} \, \du
= \sum_{\lambda\in\Lambda_+} \Pi_{k,\lambda} = I,
\end{align}
so we can for any~$k\in\NN$ and unit vector~$v\in V$ define a probability measure on $K \times \Lambda_+$ by
\begin{align}\label{eq:prod measure}
  \diff\nu^{(v)}_k(u,\lambda) = d_{\lambda} \, \norm{\Pi^+_{k,\lambda} (\phi(u^{-1}) v)^{\ot k}}^2 \, \du\diff\lambda
\end{align}
where $\du$ denotes the Haar measure on~$K$ and $\diff\lambda$ the counting measure on $\Lambda_+$.
By \cref{eq:unitary equivariance}, the density at $(u,\lambda)$ only depends on the point $\ad^*(u)\lambda \in \mathcal O_\lambda$, which motivates the following definition.

\begin{dfn}\label{dfn:X_k}
For any unit vector $v\in V$ and $k\in\NN$, define the random variable~$X_k(v) = \ad^*(u) \frac\lambda k$ in $(i\ka)^*$, where $(u,\lambda)$ is drawn from the probability measure defined in \cref{eq:prod measure}.
\end{dfn}

\noindent
That is, for any measurable function $f$ on $(i\mathfrak k)^*$ we have that
\begin{align}\label{eq:X as int}
  E\bigl[f(X_k(v))\bigr]
= \sum_{\lambda\in\Lambda_+} d_\lambda \int_K \norm{\Pi^+_{k,\lambda} (\phi(u^{-1}) v)^{\ot k}}^2 \, f\bigl(\ad^*(u) \tfrac{\lambda}k\bigr) \, \du.
\end{align}

\begin{thm}\label{thm:measure}
For any unit vector $v\in V$, $X_k(v)$ converges in probability to the constant $\mu(v)$.
\end{thm}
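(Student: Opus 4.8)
The plan is to deduce \cref{thm:measure} from \cref{thm:Cap theta main} (and \cref{thm:cap theta main}) together with the quantitative Kempf--Ness bound $\Capa_\theta^2(v)\le 1-c\,\norm{\mu(v)-\theta}^2$ cited in \cref{sec:large-dev}. The key observation is that the distribution of $X_k(v)$ is, up to a polynomial factor, controlled by the quantities $\norm{\Pi^+_{k,k\theta}(\phi(u^{-1})v)^{\ot k}}^2$, whose exponential rate is exactly $2\log\capa_\theta(\phi(u^{-1})v)$, and whose supremum over the coadjoint orbit $\mathcal O_\theta$ is $2\log\Capa_\theta(v)$. Since $\Capa_\theta(v)<1$ for every $\theta\ne\mu(v)$ by the strict concavity estimate, any $\delta$-ball around $\mu(v)$ in $(i\ka)^*$ has, on its complement inside the (compact) moment polytope region, capacity bounded by some $e^{-\epsilon}<1$; summing a polynomially-growing number of exponentially-small terms then forces the probability of $\{\norm{X_k(v)-\mu(v)}\ge\delta\}$ to zero.

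First I would make the reduction precise. Fix $\delta>0$ and let $S_\delta=\{\theta\in C_+ : \norm{\theta-\mu(v)}\ge\delta\}$ intersected with the (compact) set of $\theta$ for which $\lambda=k\theta$ can ever occur as a weight in $V^{\ot k}$ — this is contained in a fixed compact polytope $P$ independent of $k$, since weights of $V^{\ot k}$ scaled by $1/k$ lie in $k$ times a fixed polytope. For $\lambda\in\Lambda_+$ with $\lambda/k\in S_\delta\cap P$, use \cref{eq:X as int} and the fact that $\ad^*(u)\tfrac\lambda k$ ranges over $\mathcal O_{\lambda/k}$ to write
\begin{align*}
  \sum_{\lambda:\,\lambda/k\in S_\delta} d_\lambda \int_K \norm{\Pi^+_{k,\lambda}(\phi(u^{-1})v)^{\ot k}}^2\,\du
  \le \sum_{\lambda:\,\lambda/k\in S_\delta} d_\lambda \sup_{u\in K}\norm{\Pi^+_{k,\lambda}(\phi(u^{-1})v)^{\ot k}}^2.
\end{align*}
Now $\sup_{u\in K}\norm{\Pi^+_{k,k\theta}(\phi(u^{-1})v)^{\ot k}}^2 \le \Capa^2_\theta(v)$ by \cref{thm:cap theta main} combined with the definition of $\Capa_\theta$ as a supremum over the $K$-orbit, hence also by the supremum form \cref{eq:dual theta as sup} this sup over $k$ bounds each finite $k$; more directly, weak duality for the shifted capacity (the analogue of \cref{lem:weak duality}, obtained from \cref{eq:cap theta as cap}) gives $\norm{\Pi^+_{k,k\theta}(\phi(u^{-1})v)^{\ot k}}^{1/k}\le \capa_\theta(\phi(u^{-1})v)\le\Capa_\theta(v)$ for every $k$. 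So each summand is at most $d_\lambda\,\Capa_{\lambda/k}^{2k}(v)$.

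Next I would invoke the quantitative bound: for $\theta\in\Delta(v)$ we have $\Capa^2_\theta(v)\le 1-c\norm{\mu(v)-\theta}^2 \le 1-c\delta^2$ on $S_\delta$, while for $\theta\notin\Delta(v)$ the term vanishes outright; in either case $\Capa^2_{\lambda/k}(v)\le e^{-c\delta^2}<1$ on the relevant range. Since $d_\lambda$ is polynomial in $\norm\lambda = O(k)$ by the Weyl dimension formula, and the number of dominant weights $\lambda$ with $\lambda/k\in P$ is also polynomial in $k$, the whole sum is bounded by $\poly(k)\cdot e^{-c\delta^2 k}\to 0$. Since the total mass is $1$ (by \cref{eq:povm}), this says exactly $\Pr[\norm{X_k(v)-\mu(v)}\ge\delta]\to 0$, i.e.\ convergence in probability.

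The main obstacle is bookkeeping rather than conceptual: one must confirm that (i) all occurring weights $\lambda$ lie, after scaling by $1/k$, in a single compact polytope independent of $k$ (true because the weights of $V^{\ot k}$ are sums of $k$ weights of $V$, hence lie in $k$ times the convex hull of the weights of $V$), so that there are only polynomially many relevant $\lambda$ and $d_\lambda$ is polynomially bounded; and (ii) the quantitative Kempf--Ness inequality of \cite{burgisser2019towards} applies with a constant $c$ uniform over $\theta\in\Delta(v)$, which is exactly its stated form. A minor subtlety is that the polynomial prefactor in \cref{thm:cap theta main} runs the other way (we used the easy weak-duality inequality, which holds for every finite $k$ with no prefactor, so this causes no trouble). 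With these in hand the estimate closes cleanly.
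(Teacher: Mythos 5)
There is a genuine gap. Your reduction replaces the event $\{\norm{X_k(v)-\mu(v)}\ge\delta\}$ by the set of types $\{\lambda : \lambda/k \in S_\delta\}$, but these are not the same. The event for $X_k$ depends jointly on $(u,\lambda)$: since $X_k(v)=\ad^*(u)\tfrac{\lambda}{k}$, the indicator $\id_{\{\norm{X_k-\mu(v)}\ge\delta\}}$ can be $1$ for pairs $(u,\lambda)$ where $\lambda/k$ is arbitrarily close to $s(\mu(v))$ but $u$ rotates $\lambda/k$ away from $\mu(v)$. By taking $\sup_{u\in K}$ (passing from $\capa_\theta(\phi(u^{-1})v)$ to $\Capa_\theta(v)$) you discard exactly the $u$-dependence that controls that part of the event; your sum over $\lambda/k\in S_\delta$ therefore bounds only $\Pr[Y_k(v)\in S_\delta]$, which is the content of \cref{cor:measure}, not \cref{thm:measure}. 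A secondary issue: $S_\delta=\{\theta\in C_+:\norm{\theta-\mu(v)}\ge\delta\}$ is not the right set when $\mu(v)\notin C_+$ (then $S_\delta$ is all of $C_+$ for small $\delta$); you would want $s(\mu(v))$ or the distance to $\mathcal O_{\mu(v)}$, and the quantitative bound you invoke, $\Capa^2_\theta(v)\le 1-c\norm{\mu(v)-\theta}^2$, really gives $1-c\norm{s(\mu(v))-\theta}^2$ after the sup over $K$.

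The fix is to avoid the sup over $u$. In \cref{eq:X as int} with $f$ the indicator of $\{x:\norm{x-\mu(v)}\ge\eps\}$, bound the density at each pair $(u,\lambda)$ by weak duality,
\begin{align*}
\norm{\Pi^+_{k,\lambda}(\phi(u^{-1})v)^{\ot k}}^2\le\capa^{2k}_{\lambda/k}(\phi(u^{-1})v),
\end{align*}
and then apply the quantitative Kempf--Ness bound \emph{pointwise in $u$}: by $K$-equivariance of $\mu$,
\begin{align*}
\capa^2_{\lambda/k}(\phi(u^{-1})v)\le 1-c\,\norm{\ad^*(u^{-1})\mu(v)-\tfrac{\lambda}{k}}^2=1-c\,\norm{\mu(v)-\ad^*(u)\tfrac{\lambda}{k}}^2\le 1-c\eps^2
\end{align*}
whenever the indicator is nonzero. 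This gives an exponential bound $(1-c\eps^2)^k$ on every term in the region of integration, uniformly over the polynomially many $\lambda$ and with $d_\lambda$ growing polynomially, which closes the argument. Your polynomial-counting and weak-duality observations are all correct and do carry over once the $u$-dependence is retained.
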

\begin{proof}
To prove this, we will show that
\begin{align*}
   \Pr\bigl[\norm{X_k(v) - \mu(v)} \geq \eps\bigr] \to 0
\end{align*}
for any $\eps>0$, where $\norm{\cdot}$ denotes the norm on $(i\ka)^*$ discussed earlier.
For this, consider \cref{eq:X as int} with~$f$ the indicator function of the set $\{ x \in (i\mathfrak k)^* : \norm{x - \mu(v)} \geq \eps \}$.
Since the dimension of $\Sym^k(V)$ grows polynomially with~$k$, there are only polynomially many~$\lambda$ to consider.
Moreover, $d_\lambda$ grows only polynomially with~$k$ by the Weyl dimension formula.
Thus it suffices to show that
\begin{align*}
  \norm{\Pi^+_{k,\lambda} (\phi(u^{-1}) v)^{\ot k}}^2
\end{align*}
decays exponentially with~$k$ provided $\norm{\mu(v) - \ad^*(u)\theta} \geq \eps$.
By \cref{thm:cap theta main,eq:dual theta as sup},
\begin{align*}
  \sup_{k\in\NN} \, \norm{\Pi^+_{k,k\theta}((\phi(u^{-1}) v)^{\ot k})}^{\frac1k}
= \capa_\theta(\phi(u^{-1})v)
\end{align*}
so we only need to upper bound the right-hand side capacity by a number strictly smaller than 1 that works uniformly for all $(\theta,u)$ such that $\norm{\mu(v) - \ad^*(u)\theta} \geq \eps$.

For this, we use the result from~\cite{burgisser2019towards} that there exists a constant $c>0$, depending only on the representation, such that
\begin{align*}
  \capa^2_\theta(w) \leq 1 - c \norm{\mu([w]) - \theta}^2
\end{align*}
for all $w\in V$ and $\theta\in\QQ_+\Lambda_+$ such that $\capa_\theta(w)>0$.
If we apply this to~$w=\phi(u^{-1})v$, we obtain
\begin{align*}
  \capa^2_\theta(\phi(u^{-1})v)
\leq 1 - c \norm{\ad^*(u^{-1})\mu(v) - \theta}^2
= 1 - c \norm{\mu(v) - \ad^*(u)\theta}^2,
\end{align*}
since the moment map is $K$-equivariant and the norm invariant under the coadjoint action.
This concludes the proof.
\end{proof}

As a corollary, we obtain the limit of the random variables~$Y_k(v)$ that take value~$\frac\lambda k \in C_+$ with probability~$\norm{\Pi_{k,\lambda} v^{\ot k}}^2$.
Let $s\colon (i\ka)^*\to C_+$ denote the map that sends $\mathcal O_\theta \mapsto \theta$ for any $\theta\in C_+$.

\begin{cor}\label{cor:measure}
For any unit vector $v\in V$, $Y_k(v)$ converges in probability to~$s(\mu(v))$.
\end{cor}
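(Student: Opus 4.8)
The plan is to obtain \cref{cor:measure} from \cref{thm:measure} by the continuous mapping theorem, after identifying the law of $Y_k(v)$ with the pushforward along $s$ of the law of $X_k(v)$.

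First I would check that, for every $k$, the random variables $Y_k(v)$ and $s(X_k(v))$ have the same law. By \cref{dfn:X_k} we have $X_k(v) = \ad^*(u)\tfrac\lambda k$ with $(u,\lambda)$ drawn from the measure $\nu^{(v)}_k$ of \cref{eq:prod measure}; since $\tfrac\lambda k\in C_+$ for $\lambda\in\Lambda_+$, it follows that $s(X_k(v)) = s(\ad^*(u)\tfrac\lambda k) = \tfrac\lambda k$, so $s(X_k(v))$ is distributed as $\tfrac\lambda k$ where $\lambda$ follows the $\lambda$-marginal of $\nu^{(v)}_k$. Integrating out $u$ and using \cref{eq:int-proj}, this marginal assigns to a given $\lambda\in\Lambda_+$ the probability
\begin{align*}
  d_\lambda \int_K \norm{\Pi^+_{k,\lambda}(\phi(u^{-1})v)^{\ot k}}^2 \, \du
  = \braket{v^{\ot k}, \Pi_{k,\lambda} v^{\ot k}}
  = \norm{\Pi_{k,\lambda} v^{\ot k}}^2 ,
\end{align*}
which is exactly the probability with which $Y_k(v)$ takes the value $\tfrac\lambda k$ by definition. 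Hence $Y_k(v)$ and $s(X_k(v))$ are equal in distribution.

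Next I would use that the sweeping map $s\colon (i\ka)^*\to C_+$, which sends each coadjoint orbit to its unique point in the positive Weyl chamber, is continuous; this is standard (for $K=\U(n)$ it is the map sending $i$ times a Hermitian matrix to its spectrum listed in decreasing order, which is even Lipschitz). \cref{thm:measure} gives $X_k(v)\to\mu(v)$ in probability, so the continuous mapping theorem yields $s(X_k(v))\to s(\mu(v))$ in probability. Since $Y_k(v)$ has the same law as $s(X_k(v))$ for every $k$ and the limit $s(\mu(v))$ is constant, convergence in probability transfers to $Y_k(v)$, which proves the corollary.

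The only delicate points are the two bookkeeping facts — the marginal identity, immediate from \cref{eq:int-proj}, and the continuity of $s$ — so I do not expect a genuine obstacle here; the substance of the statement is carried entirely by \cref{thm:measure}.
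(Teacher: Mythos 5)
Your proof is correct and matches the paper's approach: the paper likewise identifies the law of $Y_k(v)$ with that of $s(X_k(v))$ (citing \cref{eq:povm}, which is just \cref{eq:int-proj} summed over $\lambda$) and then concludes from \cref{thm:measure}. You have simply spelled out the marginal computation and made explicit the continuity of $s$ that the paper leaves implicit.
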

\begin{proof}
By \cref{eq:povm}, $Y_k(v)$ has the same distribution as $s(X_k(v))$, so the result follows at once.
\end{proof}

Note that \cref{thm:measure,cor:measure} jointly establish \cref{cor:intro convergence} in the introduction.
The preceding results strengthen Heckman's theorem, which we now recover as a corollary.

\begin{cor}[Heckman]
The measures $\mu_{V,k}$ defined in \cref{eq:nu heckman} converge weakly to the Duistermaat-Heckman measure, i.e., the distribution of $s(\mu(v))$ for a Haar random unit vector $v\in V$.
\end{cor}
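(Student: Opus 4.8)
The plan is to deduce the convergence of the Heckman measures $\mu_{V,k}$ from \cref{cor:measure} by an averaging argument over a Haar-random unit vector. First I would recall that \cref{eq:povm} shows $\sum_{\lambda} \Pi_{k,\lambda} = I$, hence $\sum_\lambda \norm{\Pi_{k,\lambda} v^{\ot k}}^2 = \norm{v^{\ot k}}^2 = 1$ for any unit vector $v$, so each unit vector gives a probability measure on $C_+$, namely the law of $Y_k(v)$. The key identity I would establish is that averaging these laws over a Haar-random unit vector $v$ on the sphere in $V$ reproduces exactly $\mu_{V,k}$. Indeed, $\EE_v \norm{\Pi_{k,\lambda} v^{\ot k}}^2 = \EE_v \braket{v^{\ot k}, \Pi_{k,\lambda} v^{\ot k}}$, and since $v^{\ot k}$ is distributed on the unit sphere of $\Sym^k(V)$ in a manner invariant under $\GL(V) \cap \U(V)$-rotations composed with... more carefully, the average $\EE_v \, v^{\ot k}(v^{\ot k})^\dagger$ is the normalized projection onto $\Sym^k(V)$, so $\EE_v \braket{v^{\ot k}, \Pi_{k,\lambda} v^{\ot k}} = \tr(\Pi_{k,\lambda} \Pi_{\Sym^k V})/\dim \Sym^k(V) = d_\lambda m_{k,\lambda}/\binom{d+k-1}{k}$, because the isotypical component of type $\lambda$ intersected with the symmetric tensors has dimension $d_\lambda m_{k,\lambda}$. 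This is precisely the weight that $\mu_{V,k}$ assigns to $\lambda/k$.

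Next I would combine this with \cref{cor:measure}. Fix a bounded continuous test function $g$ on $C_+$. Then
\begin{align*}
  \int g \, \diff\mu_{V,k} = \EE_v \, \EE\bigl[g(Y_k(v))\bigr].
\end{align*}
By \cref{cor:measure}, for each fixed unit vector $v$ we have $Y_k(v) \to s(\mu(v))$ in probability, hence $\EE[g(Y_k(v))] \to g(s(\mu(v)))$ as $k \to \infty$ (using boundedness and continuity of $g$). Since $g$ is bounded, the dominated convergence theorem in the outer expectation over $v$ yields
\begin{align*}
  \lim_{k\to\infty} \int g \, \diff\mu_{V,k} = \EE_v \, g(s(\mu(v))),
\end{align*}
which is exactly the integral of $g$ against the Duistermaat-Heckman measure, i.e., the law of $s(\mu(v))$ for Haar-random $v$. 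As $g$ was an arbitrary bounded continuous function, this establishes weak convergence.

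The main obstacle I anticipate is making the averaging identity $\EE_v \norm{\Pi_{k,\lambda} v^{\ot k}}^2 = d_\lambda m_{k,\lambda}/\binom{d+k-1}{k}$ fully rigorous. One needs that the average over the Haar measure on the unit sphere of $V$ of the rank-one projector $\ket{v^{\ot k}}\bra{v^{\ot k}}$ equals $\Pi_{\Sym^k V}/\dim \Sym^k(V)$; this is a standard fact (the symmetric subspace is spanned by such tensors and the average is $\U(V)$-invariant, hence proportional to $\Pi_{\Sym^k V}$, with the constant fixed by taking traces), but it must be invoked carefully. Then $\tr(\Pi_{k,\lambda}\Pi_{\Sym^k V})$ counts the dimension of the $\lambda$-isotypical component inside $\Sym^k(V)$, which is $d_\lambda m_{k,\lambda}$ by definition of $m_{k,\lambda}$. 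A secondary, minor point is confirming that the outer application of dominated convergence is legitimate — but since $|g(Y_k(v))| \le \norm{g}_\infty$ uniformly and the sphere has finite measure, this is immediate. Everything else is a direct assembly of \cref{cor:measure} with this combinatorial identity.
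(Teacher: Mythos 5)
Your proposal is correct and follows the paper's proof essentially verbatim: average the laws of $Y_k(v)$ over a Haar-random unit vector, identify the result with $\mu_{V,k}$ via the identity $\binom{d+k-1}{k}\,\EE_v\, v^{\ot k}(v^{\ot k})^\dagger = \Pi_{\Sym^k V}$, and then pass to the limit using \cref{cor:measure}. You are a bit more explicit than the paper about the dominated-convergence step justifying the exchange of $k\to\infty$ with the outer average, which is a welcome clarification but not a different argument.
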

\begin{proof}
We know from \cref{cor:measure} that for each $v\in v$, $Y_k(v)$ converges weakly to $s(\mu(v))$.
Now let $\diff v$ denote the Haar measure on the unit sphere of~$V$.
Then the measure $\int \PP^{Y_k(v)} \, \diff v$ converges weakly to the distribution of $s(\mu(v))$ for Haar random~$v$.
On the other hand,
\begin{align*}
  \int \PP^{Y_k(v)} \, \diff v
= \int \sum_{\lambda \in \Lambda_+} \norm{\Pi_{k,\lambda} v^{\ot k}}^2 \delta_{\frac\lambda k}\, \diff v
= \int \sum_{\lambda \in \Lambda_+} \tr(\Pi_{k,\lambda} v^{\ot k} v^{\ot k,\dagger}) \, \delta_{\frac\lambda k} \diff v.
\end{align*}
This equals $\mu_{V,k}$, since $\binom{d+k-1}k \int v^{\ot k} v^{\ot k,\dagger} \, \diff v$ is the projection onto the symmetric subspace of~$V^{\ot k}$.
\end{proof}

\subsection{Large deviations}\label{subsec:large dev}
We now discuss a different motivation coming from the theory of large deviations.
In the previous section, we saw that the random variables~$X_k(v)$ and~$Y_k(v)$ converge in measure to the moment map~$\mu(v)$ and the intersection~$s(\mu(v))$ of its coadjoint orbit with the positive Weyl chamber.
The proofs hinged on viewing \cref{thm:cap theta main} as expressions for the rate of exponential decay of the densities of these random variables.

In fact, \cref{thm:cap theta main} implies that~$\{X_k(v)\}_{k\in\NN}$ satisfies a \emph{large deviations principle} with \emph{rate function} $x \mapsto -\!\log\capa^2_x(v)$, with $\capa_x(v)$ defined as in \cref{eq:def cap x}.
That is,
\begin{align*}
  \sup_{x\in S^\circ} \log\capa^2_x(v)
\leq \liminf_{k \to \infty}\frac{1}{k} \log \Pr[X_k(v) \in S]
\leq \limsup_{k \to \infty}\frac{1}{k} \log \Pr[X_k(v) \in S]
\leq \sup_{x\in \bar S} \log\capa^2_x(v)
\end{align*}
for all Borel measurable sets $S\subseteq (i\ka)^*$.
We note that the above optimizations can be restricted to the moment map image of~$\overline{G\cdot[v]}$, since otherwise $\capa_x(v)=0$ as discussed in \cref{subsec:moment map}.
As a consequence, $\{Y_k(v)\}_{k\in\NN}$ also satisfies a large deviations principle, with rate function~$\theta \mapsto -\!\log\Capa^2_\theta(v)$. 
This can also be seen directly using \cref{thm:Cap theta main}.

In what follows, by computing $\capa_\theta$ in a few special cases, we reproduce several results from the theory of large deviations, namely Sanov's theorem, Keyl and Werner's results on quantum tomography, and Duffield's large deviations principle for multiplicities.

\subsubsection*{Sanov's theorem}
We start with Sanov's theorem, a classical result in large deviations theory~\cite{sanov1957probability}.
Let~$\mathcal P_n$ denote the simplex of probability mass funtions over the finite alphabet~$\{1,\dots,n\}$.

\begin{thm}[Sanov]\label{thm:sanov}
Let~$X_k$ denote the empirical distribution of~$k$ independent samples from a distribution~$q\in\mathcal P_n$.
Then, $X_k$ obeys a large deviations principle with rate function~$\DKL(\cdot\Vert q)$, where $\DKL(p\Vert q) := \sum_{k=1}^n p_k \log(p_k/q_k)$ denotes the Kullback--Leibler divergence or relative entropy.
\end{thm}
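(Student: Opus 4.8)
The plan is to obtain Sanov's theorem as the abelian special case of \cref{thm:Cap theta main}, with the rate function identified by an explicit evaluation of $\capa_\theta$. Take $K=\U(1)^n$ with complexification $G=(\CC^\times)^n$ acting diagonally on $V=\CC^n$, and let $v=(\sqrt{q_1},\dots,\sqrt{q_n})$, a unit vector with $\mu(v)=q$. Since $G$ is abelian, all irreducible representations are one-dimensional, so highest weight spaces, weight spaces, and isotypical components coincide and $d_\lambda=1$; moreover the coadjoint action is trivial, so $X_k(v)=Y_k(v)$. The weight-$\lambda$ subspace of $V^{\ot k}$ is spanned by the $\binom{k}{\lambda}$ standard basis tensors of type~$\lambda$, each occurring in $v^{\ot k}$ with coefficient $\prod_j q_j^{\lambda_j/2}$, so
\begin{align*}
  \norm{\Pi_{k,\lambda} v^{\ot k}}^2 = \binom{k}{\lambda} \prod_{j=1}^n q_j^{\lambda_j},
\end{align*}
which is exactly the multinomial probability that $k$ i.i.d.\ samples from $q$ produce empirical counts~$\lambda$. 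Hence $Y_k(v)$ is distributed as the empirical distribution $X_k$ of the theorem.

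Next I would invoke the large deviations principle from \cref{subsec:large dev}, a consequence of \cref{thm:Cap theta main}: $Y_k(v)$ obeys an LDP with rate function $\theta\mapsto-\log\Capa_\theta^2(v)$. Because $K$ acts by phases, $\abs{\chi_{-\theta}(gu)}=\abs{\chi_{-\theta}(g)}$ and $\norm{\phi(gu)v}=\norm{\phi(g)v}$ for $u\in K$, so $\Capa_\theta(v)=\capa_\theta(v)$. For the torus, $A=\exp(i\ta)$ is the group of positive diagonal matrices and $N^-$ is trivial, so writing $s_j=\log\abs{g_j}$ one has $\abs{\chi_{-\theta}(g)}=e^{-\sum_j\theta_j s_j}$ and $\norm{\phi(g)v}^2=\sum_j e^{2s_j}q_j$; substituting $t_j=e^{2s_j}$ and using scale-invariance of the ratio (when $\sum_j\theta_j=1$) gives
\begin{align*}
  \capa_\theta(v)^2 = \inf_{t\in\RR_{>0}^n} \, \frac{\sum_j q_j t_j}{\prod_j t_j^{\theta_j}}.
\end{align*}

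Finally I would solve this convex optimization: for $\theta\in\mathcal P_n$ with $\supp\theta\subseteq\supp q$, a Lagrange-multiplier computation (equivalently, weighted AM--GM) gives optimum $\prod_j(q_j/\theta_j)^{\theta_j}=e^{-\DKL(\theta\Vert q)}$, attained at $t_j\propto\theta_j/q_j$; for $\theta$ outside the simplex or with $\theta_j>0=q_j$ for some $j$ one has $\theta\notin\Delta(v)$, so $\capa_\theta(v)=0=e^{-\DKL(\theta\Vert q)}$. Thus the rate function is $-\log\capa_\theta^2(v)=\DKL(\theta\Vert q)$, which is Sanov's theorem. The only point requiring care is matching the abstract LDP of \cref{subsec:large dev} (its lower and upper bounds over open and closed sets) with the customary statement of Sanov's theorem; this is immediate here since $Y_k(v)$ lives on the compact simplex~$\mathcal P_n$, so no exponential-tightness argument is needed. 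As a consistency check, Stirling's formula applied directly to $\binom{k}{k\theta}\prod_j q_j^{k\theta_j}$ yields the same rate.
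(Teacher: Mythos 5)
Your proposal is correct and follows essentially the same route as the paper: set up $G=(\CC^\times)^n$ acting on $\CC^n$ with $v=(\sqrt{q_1},\dots,\sqrt{q_n})$, observe that $X_k(v)$ (equivalently $Y_k(v)$, since the coadjoint action is trivial) is the empirical distribution, invoke the LDP from the large-deviations subsection, and compute $-\log\capa_\theta^2(v)=\DKL(\theta\Vert q)$. You simply spell out two steps that the paper leaves implicit — the multinomial identity $\norm{\Pi_{k,\lambda}v^{\ot k}}^2=\binom{k}{\lambda}\prod_j q_j^{\lambda_j}$, and the explicit solution of the convex program via Lagrange multipliers (which the paper expresses equivalently as the Legendre-transform formula $\sup_x(\theta\cdot x-\log\sum_j e^{x_j}q_j)$) — so there is no substantive difference.
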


\noindent
Before proceeding with the proof, let us discuss how the setting of Sanov's theorem is a special case of our setting.
Let $G$ denote the complex torus~$\CC_\times^n$.
Since the group is abelian, $K=T$, and we may identify $(i\ka)^* = \RR^n$ and $\Lambda = \ZZ^n$.
Let $G$ act on $V=\CC^n$ by coordinate-wise multiplication, i.e. $(\phi(g)v)_k = g_k v_k$ for $k\in[n]$.
Finally, define $v\in V$ such that $\abs{v_k}^2 = q_k$ for $k\in [n]$.
We now make two observations:
\begin{enumerate}
\item The $X_k$ are distributed as the random variables $X_k(v)$ defined above in \cref{dfn:X_k}.
\item The moment map image $\mu(v)$ is equal to $q$.
\end{enumerate}
Thus \cref{thm:sanov} follows from the following computation which holds for all $\theta\in\mathcal P_n$:
\begin{align*}
  -\log\capa^2_\theta(v)
= \sup_{x\in\RR^n} \Bigl( \theta\!\cdot\!x - \log \sum_{k=1}^n e^{x_k} q_k \Bigr)
= \DKL(\theta\Vert q)
\end{align*}

In fact, for abelian groups~$G=T$ we have a much more general formula.
Let $\mathcal P(\Omega)$ denote the set of probability mass functions over~$\Omega \subseteq \Lambda$, the finite set of weights of the representations.
Then we have for~$\theta\in\mathcal P(\Omega)$ and any unit vector $v\in V$ that
\begin{align}\label{eq:cap-relent}
\begin{array}{ccrl}
  -\log\capa_\theta^2(v) &=& \min &\DKL(p\Vert q) \\
  &&\text{subject to} & \sum_{\omega\in\Omega} p_\omega \, \omega = \theta, \quad p \in \mathcal P(\Omega)
\end{array}
\end{align}
where $q_\omega = \norm{P_\omega v}^2$.
This formula is straightforward to prove using Lagrange multipliers, and is itself a consequence of Sanov's theorem.
In turn, Sanov's theorem can be recovered from the general formula by specializing to the representation of $G=\CC_\times^n$ on defined above.
Here, $\Omega=\{e_1,\dots,e_n\}$, so the constraints tells us $\sum_{k=1}^n p_k e_k = \theta$, so the minimization is over the single point~$p=\theta$.

This setup has an application to what one might call `generalized permanents.'

\begin{exa}[Generalized permanents and van der Waerden's theorem]
Consider the following quantity, introduced by Barvinok~\cite{Ba10} and studied by Gurvits~\cite{Gu15}.
For vectors $r \in \QQ_{\geq 0}^n$, $c \in \QQ_{\geq 0}^m$, let $\I(r,c)\subset \Mat_{n\times m}(\ZZ_{\geq0})$ denote the set of nonnegative integer matrices with row sums equal to~$r$ and column sums equal to~$c$ (which can be empty).
For a matrix $M \in \Mat_{n\times m}(\RR_{\geq0})$ let
\begin{align*}
  \perm_{r, c}(M) :=  \sum_{B \in \I(r, c)} \prod_{i \in [n], j \in [m]} M^{B_{ij}}/B_{ij}!
\end{align*}
where the empty sum is taken to be zero.
In particular, for $n = m$ and denoting by~$\mathbf 1$ the all-ones vector, $\perm_{\mathbf{1}, \mathbf{1}}(M)$ is the \emph{permanent} $\perm(M)$ of $M$.
One has the following formula for the exponential decay of $\perm_{t\mathbf{1}, t\mathbf{1}}(M)$ in~$t$:
\begin{align}\label{eq:perm dual}
  \limsup_{k \to \infty}\left(k!\perm_{(k/n)\mathbf{1}, (k/n)\mathbf{1}}(M)\right)^{1/k}
= \inf_{x,y \in \RR_{> 0}^n}  \frac{\sum_{ij\in [n]} M_{ij} x_iy_j}{\left(\prod_{i\in [n]} x_i y_i\right)^{1/n}}.
\end{align}
One proves this by applying \cref{thm:cap theta main} to the action of $G = \CC^n_\times \times \CC^n_\times$ on $V = \Mat_n(\CC)$ by pre- and post-multiplication.
Then, $\perm_{t\mathbf{1}, t\mathbf{1}} (M) = \norm{\Pi_{tn, (t\mathbf{1}, t\mathbf{1})}(\sqrt{M} ^{\ot tn})}^2$, where~$\sqrt{M}$ denotes the entrywise square root of~$M$, while the right-hand side of \cref{eq:perm dual} is simply~$\capa^2_{(\mathbf{1}/n, \mathbf{1}/n)}(\sqrt{M})$.
In particular,
\begin{align*}
  \perm(M) \leq \frac1{n!} \capa^{2n}_{(\mathbf{1}/n, \mathbf{1}/n)}(\sqrt{M}).
\end{align*}
The Van der Waerden theorem for the permanent of a doubly stochastic matrix supplies the reverse inequality $\perm(M) \geq \capa^{2n}_{(\mathbf{1}/n, \mathbf{1}/n)}(\sqrt{M}) n!/n^{2n}$~\cite{egoryvcev1980cyr}.
More generally, for $\sum_{i=1}^n r_i = \sum_{j=1}^m c_j = 1$ and $M$ a nonnegative $n\times m$ matrix,
\begin{align*}
  \limsup_{k \to \infty}(k!\perm_{kr, kc}(M))^{\frac1k}
= \inf_{x \in \RR_{> 0}^n, y \in \RR_{> 0}^m} \frac{\sum_{i\in [n], j \in [m]} M_{ij} x_i y_j}{\left( \prod_{i\in [n]} x_i^{r_i} \right)\left( \prod_{j\in [m]} y_j^{c_j} \right)}.
\end{align*}
For a discussion of reverse inequalities for other $k, r, c$, see~\cite{Gu15}.
\end{exa}

\subsubsection*{Keyl and Werner's results on quantum tomography.}
Let $G = \GL(n)$, $K=\U(n)$ act on square complex matrices $A \in \Mat_n(\CC)$ by left multiplication; in this case $\mu(A) = A A^\dagger / \norm{A}_F^2$.
Keyl and Werner established large-deviations principles for~$X_k(A)$ and~$Y_k(A)$, motivated by the problem of estimating an unknown quantum state and its eigenvalues, respectively~\cite{keyl2006quantum,keyl2005estimating} (for optimality and variations see~\cite{odonnell2016efficient,haah2017sample}).

We may assume $\norm{A}_F=1$.
Note that the random variables depend only on~$\sigma = AA^\dagger$; accordingly we write $X_k(\sigma)$ and $Y_k(\sigma)$.
We may identity $(i\ka)^*$ with the Hermitian~$n\times n$ matrices, and choose $C_+$ as the real diagonal matrices with decreasing diagonal, which we identify with a subset of~$\RR^n$; then the map~$s\colon(i\ka)^*\to C_+$ sends a Hermitian matrix to its sorted eigenvalues.
Accordingly, $B^-$ is the invertible lower triangular matrices.
Finally, let $\mathcal D_n$ denote the set of positive semidefinite $n\times n$ matrices with unit trace.
Since $\Delta(\sigma) \subseteq \mathcal D_n$, it suffices to compute the rate function for this set.

\begin{thm}[Keyl]
The $\mathcal D_n$-valued random variables $X_k(\sigma)$ satisfy a large deviations principle with rate function~$I(\cdot\Vert\sigma)$, defined for all $\rho = u\diag(p)u^\dagger$ with $u\in U(n)$ and $p\in\mathcal P_n \cap C_+$ by
\begin{align*}
  I(\rho\Vert\sigma) = \sum_{k=1}^n \left( p_k \log p_k - (p_k - p_{k+1}) \log \prim_k(u^\dagger \sigma u) \right),
\end{align*}
where $\prim_k$ denotes the $k^{th}$ principal minor (upper left $k\times k$ subdeterminant), $0\log 0:=0$, $p_{n+1}:= 0$.
\end{thm}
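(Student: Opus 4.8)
The plan is to reduce the theorem to a single computation of the $\theta$-capacity and to carry that out using the Cholesky factorization of $\sigma$. By \cref{thm:cap theta main} and the large deviations principle it implies (see \cref{subsec:large dev}), the random variables $X_k(\sigma)$ obey an LDP with rate function $x \mapsto -\log\capa_x^2(A)$ for any $A$ with $AA^\dagger = \sigma$ and $\norm A_F = 1$. Writing $x = \ad^*(u)\theta$ with $u \in \U(n)$ and $\theta = \diag(p)$, $p \in \mathcal P_n\cap C_+$, we have $\capa_x(A) = \capa_\theta(\phi(u^{-1})A) = \capa_\theta(u^{-1}A)$ and $(u^{-1}A)(u^{-1}A)^\dagger = u^\dagger\sigma u$, and since $\Delta(\sigma)\subseteq\mathcal D_n$ the optimizations in the LDP may be restricted to $\mathcal D_n$. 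Hence it suffices to prove
\[
  -\log\capa_\theta^2(A) = \sum_{k=1}^n\Bigl(p_k\log p_k - (p_k - p_{k+1})\log\prim_k(\sigma)\Bigr)
\]
whenever $\theta = \diag(p)$, $p \in \mathcal P_n\cap C_+$, and $\sigma = AA^\dagger$. I would first treat the generic case $\sigma \succ 0$, in which all $\prim_k(\sigma) > 0$; the boundary of $\mathcal P_n\cap C_+$, singular $\sigma$, and the degenerate conventions ($0\log 0 := 0$, vanishing minors) are then recovered by continuity, while for $\theta\notin\mathcal D_n$ both sides are $+\infty$ by a scaling argument.

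To compute $\capa_\theta^2(A)$, I would unfold the second form of the $\theta$-capacity with $G=\GL(n)$ acting by left multiplication, $A=\exp(i\ta)$ the positive diagonal matrices, and $N^-$ the lower unitriangular matrices: every $g = e^H n$ with $H\in i\ta$, $n\in N^-$ is lower triangular with positive diagonal $d_k := g_{kk}$, and $\abs{\chi_{-\theta}(g)} = e^{-\theta(H)} = \prod_k d_k^{-p_k}$, so
\[
  \capa_\theta^2(A) = \inf_g\ \Bigl(\prod_{k=1}^n d_k^{-2p_k}\Bigr)\norm{gA}_F^2 = \inf_g\ \Bigl(\prod_{k=1}^n d_k^{-2p_k}\Bigr)\sum_{k=1}^n (g\sigma g^\dagger)_{kk},
\]
where $g$ ranges over lower-triangular matrices with positive diagonal. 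The heart of the argument is the inequality $(g\sigma g^\dagger)_{kk} \ge d_k^2\,\delta_k$, valid for every such $g$, where $\delta_k := \prim_k(\sigma)/\prim_{k-1}(\sigma)$ and $\prim_0(\sigma):=1$. This holds because, $g$ being lower triangular, the leading $k\times k$ block of $g\sigma g^\dagger$ equals $g_{[k]}\,\sigma_{[k]}\,g_{[k]}^\dagger$, so $\prim_k(g\sigma g^\dagger) = \bigl(\prod_{j\le k} d_j^2\bigr)\prim_k(\sigma)$, and Fischer's inequality applied to the block partition of $(g\sigma g^\dagger)_{[k]}$ into its leading $(k-1)\times(k-1)$ submatrix and its $(k,k)$ entry gives $(g\sigma g^\dagger)_{kk} \ge \prim_k(g\sigma g^\dagger)/\prim_{k-1}(g\sigma g^\dagger) = d_k^2\delta_k$. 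Equality holds for all $k$ simultaneously exactly when $g\sigma g^\dagger$ is diagonal, which is attained by $g = \diag(d)\,L^{-1}$, where $\sigma = LDL^\dagger$ is the Cholesky factorization ($L$ lower unitriangular, $D = \diag(\delta_1,\dots,\delta_n)$).

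Combining, $\capa_\theta^2(A) = \inf_{d > 0}\ \prod_k d_k^{-2p_k}\sum_k d_k^2\delta_k$. Since $\sum_k p_k = 1$, the weighted AM--GM inequality with weights $p_k$, applied to $x_k = d_k^2\delta_k/p_k$, yields $\sum_k d_k^2\delta_k \ge \bigl(\prod_k d_k^{2p_k}\bigr)\bigl(\prod_k\delta_k^{p_k}\bigr)\bigl(\prod_k p_k^{-p_k}\bigr)$, with equality when $d_k^2 \propto p_k/\delta_k$; hence $\capa_\theta^2(A) = \prod_k p_k^{-p_k}\prod_k\delta_k^{p_k}$. Taking $-\log$, the first factor contributes $\sum_k p_k\log p_k$, and Abel summation together with $\prim_0(\sigma)=1$ and $p_{n+1}:=0$ turns $\sum_k p_k\log\delta_k = \sum_k p_k\bigl(\log\prim_k(\sigma) - \log\prim_{k-1}(\sigma)\bigr)$ into $\sum_k (p_k - p_{k+1})\log\prim_k(\sigma)$, giving the stated formula; replacing $\sigma$ by $u^\dagger\sigma u$ then yields $I(\rho\Vert\sigma)$. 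I expect the principal-minor inequality $(g\sigma g^\dagger)_{kk}\ge d_k^2\delta_k$ and its sharpness to be the main point, since this is precisely what connects the $A$- and $N^-$-parts of the Iwasawa decomposition to the pivots of $\sigma$; the reduction via the general LDP and the final weighted AM--GM optimization are routine.
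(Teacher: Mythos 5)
Your proposal is correct, and the computation it carries out matches the paper's answer, but the route to it is genuinely different in the one place where the paper leans on general theory. Both proofs reduce, as you do, to evaluating $-\log\capa^2_\theta$ with $\theta=\diag(p)$ at a matrix $A$ with $AA^\dagger=\sigma$, and both ultimately exhibit the Cholesky-based optimizer. The paper, however, does \emph{not} verify optimality by hand: it invokes the Kempf--Ness characterization from \cref{subsec:moment map} (the infimum is attained at any $b\in B^-$ with $\mu([bu^\dagger A])=p$), picks $b=\diag(\sqrt p)\,c^+$ with $c$ a Cholesky factor of $u^\dagger\sigma u$, observes $\norm{bu^\dagger A}_F=1$, and then reads off $-\log\abs{\chi_{-p}(b)}^2$ via the same Abel-summation manipulation you use. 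Your argument instead proves optimality from scratch: the pointwise bound $(g\sigma g^\dagger)_{kk}\ge d_k^2\,\delta_k$ via Fischer's inequality on the leading $k\times k$ block (with sharpness precisely when $g\sigma g^\dagger$ is diagonal), followed by weighted AM--GM over the diagonal scalings $d_k$. This is longer but entirely self-contained and makes explicit why the Cholesky pivots $\delta_k$ appear, whereas the paper's proof is shorter at the cost of importing the Kempf--Ness machinery and the claim that the critical-point condition determines the infimum. Both treatments wave at the degenerate cases ($\rho$ or $\sigma$ singular) rather than spelling them out — the paper restricts to the support as in~\cite{burgisser2018efficient}, you appeal to continuity — and in both cases the claim is believable but would need a little care if one wanted a fully rigorous boundary argument; this is not a gap specific to your proposal.
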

\begin{proof}
It suffices to show that $-\log\capa_\rho(A) = I(\rho\Vert\sigma)$ for $AA^\dagger = \sigma$.
Then the infimum in $\capa_\rho(A) = \inf_{b\in B^-} \, \abs{\chi_{-p}(b)} \, \norm{bu^\dagger A}_F$ is attained for any lower triangular matrix such that $\mu([bu^\dagger A]) = p$, and if there is no such matrix then $\capa_\rho(A) = 0$.
If~$\rho$ is positive definite, this follows from the discussion in \cref{subsec:moment map}; if not one can restrict to the support of $\rho$ and reason analogously there (as in~\cite{burgisser2018efficient}).
We can choose $b := \diag(\sqrt p) c^+$, where $u^\dagger \sigma u = cc^\dagger$ with $c$ is a Cholesky decomposition and $c^+$ denotes a matrix that is the inverse of $c$ on the support of $cc^\dagger$. Then it also holds that~$\norm{bu^\dagger A}_F=1$, so we obtain
\begin{align*}
  -\log\capa^2_\rho(A)
&= -\log \, \abs{\chi_{-p}(b)}^2
= \sum_{k=1}^n p_k \log \, \abs{b_{kk}}^2
= \sum_{k=1}^n (p_k - p_{k+1}) \log \, \abs{\prim_k(b)}^2 \\
&= \sum_{k=1}^n p_k \log p_k - \sum_{k=1}^n (p_k - p_{k+1}) \log \, \abs{\prim_k(c)}^2
= I(\rho\Vert\sigma),
\end{align*}
where the last step follows since $\abs{\prim_k(c)}^2 = \prim_k(u^\dagger \sigma u)$ for all~$k\in[n]$.
\end{proof}

We remark that $I(\rho\Vert\sigma)$ is upper bounded by the quantum relative entropy~$D(\rho\Vert\sigma) = \tr(\rho\log\rho) -\tr(\rho\log\sigma)$~\cite{keyl2006quantum}.
The following corollary computes the rate for the random variables~$Y_k(\sigma)$. We note that $s(\sigma)$ is necessarily a probability distribution since $\sigma$ is positive semidefinite with unit trace.

\begin{cor}[Keyl-Werner]
The random variables~$Y_k(\sigma)$ satisfy a large deviations principle with rate function $\DKL(\cdot\Vert q)$, where $q \in \mathcal P_n \cap C_+$ is the vector of eigenvalues of $\sigma$, sorted decreasingly.
\end{cor}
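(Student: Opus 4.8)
The plan is to deduce this from the large deviations principle for $X_k(\sigma)$ just established, using the contraction principle. First recall from \cref{cor:measure} (via \cref{eq:povm}) that $Y_k(\sigma)$ has the same law as $s(X_k(\sigma))$, where $s\colon(i\ka)^*\to C_+$ is the continuous map sending a Hermitian matrix to its decreasingly sorted eigenvalue vector. Since $X_k(\sigma)$ is supported on the compact set $\mathcal D_n$ and, by the preceding theorem, obeys a large deviations principle with the (automatically good) rate function $I(\cdot\Vert\sigma)$, the contraction principle yields a large deviations principle for $Y_k(\sigma)$ with rate function
\begin{align*}
  J(p) = \inf_{u\in\U(n)} I\bigl(u\diag(p)u^\dagger\Vert\sigma\bigr), \qquad p\in\mathcal P_n\cap C_+ .
\end{align*}
So everything reduces to the claim $J = \DKL(\cdot\Vert q)$.

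To evaluate $J(p)$ I would note that in the formula for $I$ the term $\sum_k p_k\log p_k$ is independent of $u$, so that minimizing $I$ over $u$ is the same as maximizing $\sum_{k=1}^n (p_k-p_{k+1})\log\prim_k(u^\dagger\sigma u)$, a nonnegative combination of logarithms of leading principal minors (nonnegative because $p\in C_+$ forces $p_k\ge p_{k+1}$). The crucial point is that all the leading principal minors of $u^\dagger\sigma u$ can be maximized simultaneously: for a positive semidefinite $M$ with eigenvalues $q_1\ge\cdots\ge q_n$, Cauchy interlacing applied to the top-left $k\times k$ block gives $\prim_k(M)\le q_1\cdots q_k$, and equality holds for \emph{every} $k$ when $M=\diag(q_1,\dots,q_n)$. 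Hence the maximum over $u$ is attained by any unitary diagonalizing $\sigma$ with eigenvalues in decreasing order, giving $\prim_k(u^\dagger\sigma u)=q_1\cdots q_k$ for all $k$.

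Finally I would substitute and simplify. Writing $\log(q_1\cdots q_k)=\sum_{j\le k}\log q_j$ and summing by parts with $p_{n+1}=0$,
\begin{align*}
  \sum_{k=1}^n (p_k-p_{k+1})\sum_{j=1}^k\log q_j = \sum_{j=1}^n \log q_j\sum_{k=j}^n (p_k-p_{k+1}) = \sum_{j=1}^n p_j\log q_j,
\end{align*}
whence $J(p)=\sum_k p_k\log p_k-\sum_k p_k\log q_k=\DKL(p\Vert q)$. The only thing needing a little care is the singular case $q_k=0$ for some $k$, where both $J(p)$ and $\DKL(p\Vert q)$ are $+\infty$ unless $p$ is supported on $\{j:q_j>0\}$, in which case one restricts to that support and runs the same argument there (as in the proof of the preceding theorem), with the convention $0\log0:=0$ keeping everything consistent. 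I expect the interlacing observation --- that $\diag(q)$ is a common maximizer of all the leading principal minors --- to be the one substantive step, the rest being bookkeeping.
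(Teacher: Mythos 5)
Your proposal is correct and takes essentially the same approach as the paper: reduce to evaluating $\inf_{u\in\U(n)} I(u\diag(p)u^\dagger\Vert\sigma)$ and apply the Cauchy interlacing theorem to show that $\prim_k(u^\dagger\sigma u)\leq\prod_{i\leq k}q_i$ for all $k$, with simultaneous equality when $u$ diagonalizes $\sigma$. The paper is terser (it omits the contraction-principle bookkeeping, the summation by parts, and the singular case), but the one substantive idea --- that $\diag(q)$ simultaneously maximizes every leading principal minor --- is the same.
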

\begin{proof}
We need to show that
\begin{align*}
  \inf_{u\in\U(n)} I(u\diag(p)u^\dagger\Vert\sigma) = \DKL(p\Vert q).
\end{align*}
By the Cauchy interlacing theorem, the eigenvalues $\mu_1,\dots,\mu_k$ of the $k\times k$ principal submatrix of~$u^\dagger\sigma u$ satisfy $\mu_i \leq q_i$ for $i\in[k]$.
Thus, $\prim_k(u^\dagger\sigma u) \leq \prod_{i=1}^k q_i$, and equality is achieved if we choose~$u\in\U(n)$ such that $u^\dagger\sigma u = \diag(q)$.
\end{proof}

\subsubsection*{Duffield's theorem on tensor power multiplicities}
Returning to the general case, consider an arbitrary representation of~$G$ on a vector space~$W$, and denote by~$n_{k,\lambda}$ the multiplicity of~$V_\lambda$ in the tensor power representation~$W^{\ot k}$.
Then we can consider the following sequence of probability measures,
\begin{align*}
  \nu_{W,k} = \frac1{\sum_{\lambda\in\Lambda_+} n_{k,\lambda}} \sum_{\lambda\in\Lambda_+} n_{k,\lambda} \, \delta_{\frac\lambda k},
\end{align*}
which are defined similarly to \cref{eq:nu heckman} but with~$W^{\ot k}$ in place of the symmetric subspace.
Duffield proved the following large deviations principle for irreducible~$W$~\cite{duffield1990large} (cf.~\cite{postnova2020multiplicities}).

\begin{thm}[Duffield]\label{thm:duffield}
The probability measures~$\nu_{W,k}$ on~$C_+$ satisfy a large deviations principle with rate function
\begin{align*}
  I(\theta) = \sup_{H\in i\ta_+} \left( \theta(H) - \log \frac{\chi_W(e^H)}{d_W} \right),
\end{align*}
where $\chi_W$ denotes the character of~$W$, $d_W$ its dimension, and $i\ta_+ \subseteq i\ta$ the Weyl chamber dual to $C_+$.
\end{thm}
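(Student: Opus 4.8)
The plan is to realise Duffield's multiplicity measure as a subexponential reweighting of the law of one of the random variables $Y_k(v)$ of \cref{subsec:measures,subsec:large dev}, for a carefully chosen vector $v$, and then to evaluate the resulting rate function $\theta\mapsto-\log\Capa_\theta^2(v)$ explicitly in terms of $\chi_W$. Concretely, write $\phi_W\colon G\to\GL(W)$ for the given representation and $d_W=\dim W$, and take $V=W\ot\CC^{d_W}$ with $G$ acting only on the first factor, so $\phi=\phi_W\ot\id$. Fix orthonormal bases $\{e_j\}$ of $W$ and $\{f_j\}$ of $\CC^{d_W}$, and set $v=d_W^{-1/2}\sum_j e_j\ot f_j$, a unit vector. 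Under $V^{\ot k}\cong W^{\ot k}\ot(\CC^{d_W})^{\ot k}$ one has $\Pi_{k,\lambda}=P_{k,\lambda}\ot\id$, where $P_{k,\lambda}$ projects onto the $V_\lambda$-isotypical component of $W^{\ot k}$ (of dimension $d_\lambda n_{k,\lambda}$), while $v^{\ot k}=d_W^{-k/2}\sum_J E_J\ot F_J$ for the induced product bases $\{E_J\}$, $\{F_J\}$. Using orthonormality of the $F_J$ and $\sum_J E_J E_J^\dagger=\id$, a short trace computation gives
\begin{align*}
  \norm{\Pi_{k,\lambda}v^{\ot k}}^2=\tr\bigl(\Pi_{k,\lambda}\,v^{\ot k}(v^{\ot k})^\dagger\bigr)=\frac{\tr P_{k,\lambda}}{d_W^k}=\frac{d_\lambda\,n_{k,\lambda}}{d_W^k},
\end{align*}
so $Y_k(v)$ takes the value $\lambda/k$ with probability $d_\lambda n_{k,\lambda}/d_W^k$; its law is $\sum_\lambda d_\lambda n_{k,\lambda}d_W^{-k}\,\delta_{\lambda/k}$.

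I would then compare this with $\nu_{W,k}$. Setting $Z_k:=\sum_\lambda n_{k,\lambda}$, the measure $\nu_{W,k}$ is obtained from the law of $Y_k(v)$ by scaling the atom at $\lambda/k$ by $d_W^k/(Z_k d_\lambda)$. Every $\lambda$ with $n_{k,\lambda}>0$ has $\lambda/k$ in the fixed polytope $\conv(\text{weights of }W)$, so there are only $O(k^{\rk G})$ such $\lambda$ and each has $d_\lambda=\poly(k)$ by the Weyl dimension formula; together with $\sum_\lambda d_\lambda n_{k,\lambda}=d_W^k$ this forces $Z_k^{1/k}\to d_W$ and bounds the scaling factor between $1/\poly(k)$ and $\poly(k)$, uniformly on the common support. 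Hence $\tfrac1k\log\nu_{W,k}(S)=\tfrac1k\log\Pr[Y_k(v)\in S]+o(1)$ for every Borel set $S$, so $\nu_{W,k}$ satisfies a large deviations principle with the same rate function as $Y_k(v)$, namely $\theta\mapsto-\log\Capa_\theta^2(v)$ by \cref{thm:Cap theta main} (see \cref{subsec:large dev}).

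It remains to show $-\log\Capa_\theta^2(v)=I(\theta)$, which is the heart of the matter. For all $H\in i\ta$, $n\in N^-$ and $u\in K$,
\begin{align*}
  \norm{e^{\Phi(H)}\phi(n)\phi(u)v}^2=\frac1{d_W}\norm{\phi_W(e^H n)\phi_W(u)}_{\mathrm{HS}}^2=\frac1{d_W}\norm{\phi_W(e^H n)}_{\mathrm{HS}}^2
\end{align*}
by unitary invariance of the Hilbert--Schmidt norm, so $\capa_\theta$ is constant on the $K$-orbit of $v$ and $\Capa_\theta(v)=\capa_\theta(v)$. Feeding $v$ into the Iwasawa form of $\capa_\theta$ and using $\phi_W(e^H)^\dagger=\phi_W(e^H)$ (valid since $i\ta$ acts by Hermitian operators), a direct manipulation with the substitution $H\mapsto H/2$ yields
\begin{align*}
  \capa_\theta^2(v)=\frac1{d_W}\inf_{H\in i\ta,\ n\in N^-}e^{-\theta(H)}\,\tr\bigl(\phi_W(e^H)\,\phi_W(n)\phi_W(n)^\dagger\bigr).
\end{align*}
In an orthonormal weight basis of $W$, $\phi_W(e^H)$ is diagonal with positive entries $e^{\mu_i(H)}$, and $\phi_W(n)^\dagger w_i$ equals $w_i$ plus components in strictly higher weight spaces, so every diagonal entry of $\phi_W(n)\phi_W(n)^\dagger$ is $\geq1$, with equality at $n=e$; thus the $N^-$-infimum is attained at $n=e$ and equals $\chi_W(e^H)=\tr\phi_W(e^H)$. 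Since $\chi_W(e^H)$ is Weyl invariant and $\theta(H)\leq\theta(H^+)$ for $H^+$ the dominant Weyl representative of $H$ (as $\theta$ is dominant), the remaining infimum runs over $i\ta_+$, and so
\begin{align*}
  -\log\Capa_\theta^2(v)=-\log\capa_\theta^2(v)=\sup_{H\in i\ta_+}\Bigl(\theta(H)-\log\tfrac{\chi_W(e^H)}{d_W}\Bigr)=I(\theta).
\end{align*}

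The main obstacle is not the large-deviations bookkeeping---the reweighting by $d_\lambda$ and $Z_k$ is harmless because all the quantities involved are subexponential in $k$---but choosing $v$ correctly and exploiting the ensuing degeneracies: the maximally entangled $v$ above is precisely what makes $\norm{\Pi_{k,\lambda}v^{\ot k}}^2$ compute $d_\lambda n_{k,\lambda}/d_W^k$, makes the supremum over the $K$-orbit in $\Capa_\theta$ vacuous, and collapses the Borel ($N^-$-)part of the $\theta$-capacity optimization to the trivial point $n=e$. This last collapse is exactly what turns $-\log\capa_\theta^2(v)$ into the Legendre transform of $H\mapsto\log(\chi_W(e^H)/d_W)$ and hence recovers Duffield's rate function.
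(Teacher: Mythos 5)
Your proposal is correct and is essentially the paper's own proof: the paper takes $V=\End(W)$ with the left $G$-action and $v=I_W/\sqrt{d_W}$, which is the same as your maximally entangled vector in $W\ot\CC^{d_W}$ (one just identifies $\End(W)\cong W\ot W^*$ with $G$ acting trivially on the second factor), then observes $\norm{\Pi_{k,\lambda}v^{\ot k}}^2=d_\lambda n_{k,\lambda}/d_W^k$, argues that the $d_\lambda$-reweighting is subexponential, and evaluates $\Capa_\theta^2(v)$ by noting $K$-invariance, that the $N^-$-infimum is attained at $n=e$, and that Weyl symmetry lets the supremum run over $i\ta_+$. You supply a few details the paper leaves implicit (the verification that $Z_k^{1/k}\to d_W$, the weight-basis argument showing the diagonal entries of $\phi_W(n)\phi_W(n)^\dagger$ are $\geq 1$, and the explicit substitution $H\mapsto H/2$), but the route and key computation are the same.
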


To relate the setup of Duffield with our setting, consider the representation~$V=\End(W)$, equipped with the induced left action of~$G$ and the Hilbert-Schmidt inner product, and let $v = I_W / \sqrt{d_W} \in V$ be the normalized identity operator.
(For $G=\GL(n)$ acting on $W=\CC^n$, we recognize this as a special case of the Keyl-Werner setup.)
The measure~$\nu_{W,k}$ is not the same as the distribution of the random variable~$Y_k(v)$, since the latter takes value $\lambda/k$ with probability~$\norm{\Pi_{k,\lambda} v^{\ot n}}^2 = d_\lambda n_{k,\lambda} / d_W^n$.
Nevertheless, both sequences of probability measures have the same asymptotic behavior and large deviations rate.
This follows from the Weyl dimension formula, which implies that $d_\lambda \leq \poly(k)$ for all~$V_\lambda$ that appear in~$V^{\ot k}$.
To prove \cref{thm:duffield}, it thus suffices to verify that $-\log\Capa^2_\theta(v) = I(\theta)$.
Indeed,
\begin{align*}
  -\log\Capa^2_\theta(v)
 = -\log\capa^2_\theta(v)
 &= \sup_{H\in i\ta} \left( \theta(H) - \log \inf_{n \in N^-} \frac{\tr(e^{\Phi(H)} \phi(n)\phi(n)^\dagger)}{d_W} \right) \\
 &= \sup_{H\in i\ta} \left( \theta(H) - \log \frac{\tr e^{\Phi(H)}}{d_W} \right) = I(\theta).
\end{align*}
Here we first used that $\capa_\theta(v)$ is $K$-invariant for $v=I_W/\sqrt{d_W}$, then the infimum over $n\in N^-$ is obtained at the identity element as~$\phi(n)$ is unipotent, and the last step holds because the objective is invariant under the Weyl group.

\subsection{Mathieu conjecture}\label{subsec:mathieu}

We now discuss the relation between our results and the Mathieu conjecture in more detail.
Recall that a function $f\colon K \to \CC$ is called \emph{$K$-finite} if it takes the form $f(u) = \braket{v, \phi(u) w}$ for some finite-dimensional representation $\phi\colon G\to\GL(V)$ and vectors $v, w\in V$.
If we may take $v = w$ then $f$ is said to be \emph{positive-definite}.
We define the \emph{constant term} of~$f$ by
\begin{align*}
  \cst(f) = \int_K f(u) \, \du.
\end{align*}
Note that $\cst(f)$ is the constant term of the Fourier series of $f$ in the sense of the Peter-Weyl theorem.
The following conjecture, due to Mathieu~\cite{mathieu1995some}, is known to imply Keller's Jacobian conjecture.

\begin{conj}[Mathieu] Let $K$ be a compact connected Lie group, and let $f$ and $g$ be $K$-finite functions on $K$. If $\cst(f^k) = 0$ for all $k$, then $\cst(f^k g)=0$ for all but finitely many $k$.
\end{conj}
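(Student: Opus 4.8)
The plan is to reduce everything to the position of $v$ in its orbit closure. First I would use \cref{eq:pi_k as integral}: writing $f(u) = \braket{v, \phi(u) v}$, we have $\cst(f^k) = \norm{\Pi_k v^{\ot k}}^2$, so by \cref{eq:proj vs polys} the hypothesis $\cst(f^k) = 0$ for all $k\ge1$ is equivalent to the vanishing of every nonconstant homogeneous $G$-invariant on $v$, i.e.\ $v$ lies in the null cone, i.e.\ $\capa(v) = 0$ (this is classical, and also a consequence of \cref{thm:main}). By the discussion in \cref{subsec:moment map} this means $0\notin\Delta(v)$, the moment polytope of $\overline{G\cdot[v]}$ in $\PP(V)$. (If $v = 0$ the statement is trivial.)

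Next I would write $g$ as a matrix coefficient, $g(u) = \braket{w_1, \psi(u) w_2}$ for a rational representation $\psi\colon G\to\GL(W)$ and $w_1,w_2\in W$, and observe that since $\int_K(\phi^{\ot k}\ot\psi)(u)\,\du$ is the orthogonal projection $P_k$ onto the $G$-invariant subspace of $V^{\ot k}\ot W$,
\begin{align*}
  \cst(f^k g)
  = \int_K \braket{v, \phi(u) v}^k \braket{w_1, \psi(u) w_2}\,\du
  = \braket{v^{\ot k}\ot w_1,\, P_k(v^{\ot k}\ot w_2)}.
\end{align*}
If this is nonzero, then $v^{\ot k}\ot w_2$ has nonzero component in $(V^{\ot k}\ot W)^G$; by Schur's lemma such a component comes from an irreducible $V_\mu\subseteq W$ together with a copy of $V_\mu^*$ inside $V^{\ot k}$, and nonvanishing against $v^{\ot k}$ forces that copy of $V_\mu^*$ to be detected by $v^{\ot k}$. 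Since $v^{\ot k}$ encodes evaluation at $v$ on homogeneous degree-$k$ polynomials, this is the statement that some $G$-covariant of type $V_\mu$ (up to the duality $\mu\mapsto\mu^*$) and degree $k$ is nonzero at $v$; as $p(v)\neq 0$ for an equivariant $p$ is equivalent to $p$ being nonzero on all of $\overline{G\cdot v}$, it says that $V_\mu$ (resp.\ $V_\mu^*$) occurs in $\CC[\overline{G\cdot[v]}]_k$. By the moment polytope characterization recalled in \cref{subsec:moment map}, the relevant highest weight $\lambda$ then satisfies $\lambda/k\in\Delta(v)$.

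To conclude I would note that $\lambda$ ranges over the finite set of (duals of) highest weights of the irreducible constituents of $W$, hence is bounded independently of $k$; since $\Delta(v)$ is a closed polytope with $0\notin\Delta(v)$, there is $\eps>0$ with $\Delta(v)\cap\{\norm\theta<\eps\}=\emptyset$, so $\lambda/k\notin\Delta(v)$ once $k$ is large. Therefore $\cst(f^k g) = 0$ for all but finitely many $k$.

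The hard part will be the middle step: making precise that a nonzero $\cst(f^k g)$ genuinely localizes a highest weight $\lambda/k$ inside the moment polytope \emph{of the orbit of $v$}, rather than merely asserting $(V^{\ot k}\ot W)^G\neq 0$, which is a property of the ambient representation alone and says nothing about $v$. The point is to track which irreducible copies inside $V^{\ot k}$ are actually seen by the vector $v^{\ot k}$, which is what couples the computation to $\overline{G\cdot v}$ and lets \cref{subsec:moment map} apply; after that, the argument is the soft observation that $\lambda/k\to 0$ while $0$ is bounded away from $\Delta(v)$. As an alternative that bypasses moment polytopes, one could instead invoke Hilbert--Mumford to obtain a one-parameter subgroup $\gamma$ with $\phi(\gamma(t))v\to 0$ as $t\to 0$, so $\norm{\phi(\gamma(t))v}\le Ct^{a}$ for some $a\ge 1$ while $\norm{\psi(\gamma(t))w_j}\le C't^{-b}$ for some $b\ge 0$, and then bound $\abs{\cst(f^k g)}\le\capa(v^{\ot k}\ot w_1)\,\capa(v^{\ot k}\ot w_2)$ via Cauchy--Schwarz and \cref{lem:weak duality}, evaluating each capacity along $\gamma(t)$ as $t\to 0$ to get $\cst(f^k g) = 0$ once $ak>b$.
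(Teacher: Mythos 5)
Your primary route is, in substance, the paper's own proof translated into a different vocabulary: the paper works on the Fourier side (showing $\widehat{f^k}(\lambda)=0$ unless $\norm\lambda\geq k\eps$, then applying Parseval against the finitely supported $\widehat{\bar g}$), whereas you argue on the representation side (tracking which copies of $V_\mu^*$ inside $V^{\ot k}$ are ``seen'' by $v^{\ot k}$ and concluding $\lambda/k\in\Delta(v)$). Both hinge on the same two facts — that $0\notin\Delta(v)$ forces $\Pi_{k,\lambda}v^{\ot k}=0$ for $\norm\lambda<k\eps$, and that $g$ being $K$-finite bounds the relevant $\lambda$ independently of $k$ — so the middle step you flag as ``the hard part'' is exactly what Parseval packages cleanly; your version is correct but carries more bookkeeping.

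What is genuinely different is your Hilbert--Mumford alternative, and it works. Writing $\cst(f^k g)=\braket{v^{\ot k}\ot w_1,\,P_k(v^{\ot k}\ot w_2)}$, Cauchy--Schwarz combined with \cref{lem:weak duality} (for $k=1$, in the representation $V^{\ot k}\ot W$) gives $\abs{\cst(f^k g)}\le\capa(v^{\ot k}\ot w_1)\,\capa(v^{\ot k}\ot w_2)$, and evaluating along a destabilizing one-parameter subgroup $\gamma$ for $v$ shows $\capa(v^{\ot k}\ot w_j)\le\inf_{t}\norm{\phi(\gamma(t))v}^k\norm{\psi(\gamma(t))w_j}=0$ once $ak>b$, with $a\geq1$ and $b$ finite depending only on $W$ and $\gamma$. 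This bypasses moment polytopes and the Peter--Weyl/Parseval machinery entirely, using only the Hilbert--Mumford criterion and weak duality, and is arguably the most elementary proof of the positive-definite Mathieu conjecture available; the trade-off is that the paper's Fourier argument gives the sharper quantitative information that the Fourier support of $f^k$ grows linearly in $k$, which is what feeds into \cref{thm:main} and its refinements.
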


Mathieu's conjecture has been proved in the Abelian case by Duistermaat and van der Kallen~\cite{duistermaat1998constant}. Their \cref{eq:vdk} may be seen as a refinement of the conjecture in the Abelian case.

We now explain how to similarly treat the positive-definite case.
For a positive-definite function $f(u) = \braket{v, \phi(u) v}$, the constant terms of its powers are directly related to the projection to the trivial component, as follows from \cref{eq:pi_k as integral}:
\begin{align}\label{eq:cst vs norm}
  \cst(f^k) = \int_K \braket{v, \phi(u) v}^k \, \du = \norm{\Pi_k v^{\ot k}}^2.
\end{align}
We now prove the Mathieu conjecture in the positive-definite case by using the moment polytope.

\begin{prp}
Let $f,g$ be $K$-finite functions and suppose $f$ is positive-definite. If $\cst(f^k) = 0$ for all $k$, then $\cst(f^k g)=0$ for all but finitely many $k$.
\end{prp}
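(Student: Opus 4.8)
The plan is to deduce from \cref{thm:main} that $v$ lies in the null cone, and then exploit the structure of the moment polytope of $\overline{G\cdot[v]}$ together with the interpretation of constant terms as projections onto isotypical components. Since $f$ is positive-definite, write $f(u)=\braket{v,\phi(u)v}$. By \cref{eq:cst vs norm} the hypothesis $\cst(f^k)=0$ for all $k$ says exactly that $\norm{\Pi_k v^{\ot k}}=0$ for all $k$, and \cref{thm:main} then forces $\capa(v)=0$. By the discussion in \cref{subsec:moment map} this is equivalent to $0\notin\Delta(v)$, and since $\Delta(v)$ is a compact polytope there is $\eps>0$ with $\Delta(v)\cap B_\eps(0)=\emptyset$.

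Next I would rewrite $\cst(f^k g)$. Since every $K$-finite function is a finite linear combination of matrix coefficients of irreducible representations, and $\cst$ is linear, it suffices to treat $g(u)=\braket{a,\phi_\mu(u)b}$ for a single dominant weight $\mu$ and vectors $a,b\in V_\mu$. Then $f(u)^k g(u)=\braket{v^{\ot k}\ot a,\,(\phi^{\ot k}\ot\phi_\mu)(u)(v^{\ot k}\ot b)}$, so integrating over $K$ gives $\cst(f^k g)=\braket{v^{\ot k}\ot a,\,\Pi''(v^{\ot k}\ot b)}$, where $\Pi''$ is the orthogonal projection onto the $G$-invariant subspace of $V^{\ot k}\ot V_\mu$. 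By Schur's lemma this invariant subspace is contained in $\Pi_{k,\mu^*}(V^{\ot k})\ot V_\mu$, where $\mu^*$ is the highest weight of $V_\mu^*$ and $\Pi_{k,\mu^*}$ projects onto the $\mu^*$-isotypical component of $V^{\ot k}$; since the isotypical decomposition of $V^{\ot k}$ is orthogonal, $\Pi''(v^{\ot k}\ot b)=0$ as soon as $\Pi_{k,\mu^*}(v^{\ot k})=0$.

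Finally I would show $\Pi_{k,\mu^*}(v^{\ot k})=0$ for all $k>\norm{\mu^*}/\eps$. For such $k$ the point $\mu^*/k$ lies in $B_\eps(0)$, hence outside $\Delta(v)$, so $\Capa_{\mu^*/k}(v)=0$ by \cref{subsec:moment map}. Combining \cref{thm:cap theta main} with \cref{eq:dual theta as sup} yields, for every rational $\theta$ in the positive Weyl chamber and every $w$, the pointwise bound $\norm{\Pi^+_{k,k\theta}w^{\ot k}}\le\capa_\theta(w)^k$; taking $w=\phi(u)v$ and $\theta=\mu^*/k$ and using $\capa_{\mu^*/k}(\phi(u)v)\le\Capa_{\mu^*/k}(v)=0$ gives $\Pi^+_{k,\mu^*}(\phi(u)v)^{\ot k}=0$ for all $u\in K$, whence $\Pi_{k,\mu^*}(v^{\ot k})=0$ by the integral formula \cref{eq:int-proj}. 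Then $\cst(f^k g)=0$ for all $k>\norm{\mu^*}/\eps$, and taking the maximum of these thresholds over the finitely many weights $\mu$ occurring in $g$ finishes the proof. The only delicate point is this last step: \cref{thm:cap theta main} is a priori only an asymptotic statement, and it is the supermultiplicativity from \cref{lem:supmul plus} that upgrades it to the exact vanishing $\Pi^+_{k,k\theta}w^{\ot k}=0$ needed for each individual $k$, which then propagates to the isotypical projection through \cref{eq:int-proj}.
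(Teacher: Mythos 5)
Your proof is correct, and it arrives at the same essential insight as the paper (that $0\notin\Delta(v)$ forces the isotypic components $\Pi_{k,\lambda}v^{\ot k}$ to vanish for $\norm{\lambda}<k\eps$, while the finitely many weights appearing in $g$ are bounded), but reaches the conclusion by a somewhat different route. The paper decomposes $f^k$ into its Peter--Weyl Fourier components, verifies $d_\lambda\norm{\widehat{f^k}(\lambda)}_{\mathrm{HS}}^2=\norm{\Pi_{k,\lambda}v^{\ot k}}^2$, and then applies Parseval to $\cst(f^kg)=\braket{\bar g,f^k}_{L^2(K)}$, so that the vanishing follows because the supports of the Fourier transforms of $f^k$ and $\bar g$ become disjoint for large $k$. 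You instead expand $g$ into matrix coefficients, rewrite $\cst(f^kg)$ as a pairing with the invariant projection in $V^{\ot k}\ot V_\mu$, and use Schur's lemma to localize that projection inside $\Pi_{k,\mu^*}(V^{\ot k})\ot V_\mu$. The two arguments are essentially dual presentations of the same orthogonality statement; yours has the minor advantage of staying entirely in the vector-space picture used throughout the paper, while the paper's is shorter because the Fourier/Parseval machinery packages the Schur orthogonality automatically.

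One place where you go further than the paper is in deriving the exact vanishing $\Pi_{k,\lambda}v^{\ot k}=0$ for $\norm\lambda<k\eps$: the paper simply cites this as a known consequence of $0\notin\Delta(v)$, whereas you rederive it from \cref{thm:cap theta main}, \cref{eq:dual theta as sup}, and \cref{eq:int-proj}. This chain is valid --- supermultiplicativity does upgrade the asymptotic identity to the pointwise inequality $\norm{\Pi^+_{k,k\theta}w^{\ot k}}\le\capa_\theta(w)^k$, and averaging via \cref{eq:int-proj} then gives $\Pi_{k,\mu^*}v^{\ot k}=0$ --- but it is heavier than necessary, since the equivalence between $\theta\in\Delta(v)$ and the nonvanishing of some $\Pi_{k,k\theta}v^{\ot k}$ is standard and is cited by the paper directly. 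Both formulations are correct.
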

\begin{proof}
Suppose $f(u) = \braket{v, \phi(u) v}$ as above and $\cst(f^k) = 0$ for all $k\in\NN$.
By \cref{eq:pi_k as integral}, every homogeneous invariant polynomial vanishes on~$v$ and hence $0$ is not in the moment polytope~$\Delta(v)$ of~$v$, as mentioned in~\cref{subsec:moment map}.
Thus there exists $\eps>0$ such that $\Pi_{k,\lambda} v^{\ot k} = 0$ unless $\norm{\lambda} \geq k \eps$.

Consider the Fourier transform $\hat{f}= \bigoplus_{\lambda} \widehat{f}(\lambda) \in \bigoplus_{\lambda} \End(V_\lambda)$ of $f$, where $\lambda$ runs over $\Lambda_+$ and $\hat{f}(\lambda):= \int_{K} \phi_\lambda^\dagger(u) f(u) \du$.
One verifies that $d_\lambda \norm{\widehat{f^k}(\lambda)}_{\text{HS}}^2 = \norm{\Pi_{k,\lambda} v^{\ot k}}^2$.
Therefore, $\widehat{f^k}(\lambda) = 0$ unless~$\norm{\lambda} \geq k \eps$.
On the other hand, since $g$ is $K$-finite, its Fourier transform has finite support, and the same is true for~$\bar g$.
Thus we have an upper bound~$N$ such that $\widehat{\bar g}(\lambda) = 0$ for all $\norm{\lambda}\geq N$.
By Parseval's formula, it follows that
\begin{align*}
  \cst(f^k g) = \int f^k(u) g(u) \, \du = \braket{\bar g, f^k}_{L^2(K)} = \sum_\lambda d_\lambda \tr\bigl[ \hat{\bar g}(\lambda)^\dagger \hat f(\lambda) \bigr] = 0
\end{align*}
as soon as $k\eps > N$.
\end{proof}

While the above proof shows that Mathieu's conjecture holds for positive-definite $K$-finite functions, it does not immediately imply a formula analogous to \cref{eq:vdk} for $\limsup_{k \to \infty}\cst(f^k)^{1/k}$ where $f$ is a positive-definite $K$-finite function.
Suitably reinterpreted, \cref{thm:main} provides such a formula completely.
Note that $f$ may be treated as a function on~$G$.

\begin{thm}[\cref{thm:main} refomulated]\label{thm:main mathieu}
Let $f$ be a positive-definite $K$-finite function.
Then:
\begin{align*}
  \limsup_{k \to \infty}\cst(f^k)^{1/k} = \inf_{p \in P} f(p),
\end{align*}
where $P = \exp(i\ka) \subseteq G$.
\end{thm}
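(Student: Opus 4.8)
The plan is to read off the formula directly from \cref{thm:main} together with the identity \cref{eq:cst vs norm}, so that the only genuinely new work is an elementary identification of $\inf_{p\in P}f(p)$ with the square of a capacity. First I fix a realization $f(u)=\braket{v,\phi(u)v}$ of the positive-definite $K$-finite function $f$: by definition this comes with a finite-dimensional rational representation $\phi\colon G\to\GL(V)$, and the inner product is $K$-invariant, as throughout the paper. The matrix coefficient $g\mapsto\braket{v,\phi(g)v}$ is a regular function on $G$ restricting to $f$ on $K$, and since $K$ is Zariski dense in $G$ this extension --- the meaning of treating $f$ as a function on $G$ --- does not depend on the chosen realization, so $\inf_{p\in P}f(p)$ is well defined. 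By \cref{eq:cst vs norm} we have $\cst(f^k)=\norm{\Pi_k v^{\ot k}}^2$, hence \cref{thm:main} gives
\[
  \limsup_{k\to\infty}\cst(f^k)^{1/k}=\Bigl(\limsup_{k\to\infty}\norm{\Pi_k v^{\ot k}}^{1/k}\Bigr)^2=\capa(v)^2.
\]

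It then remains to show $\inf_{p\in P}f(p)=\capa(v)^2$, where $P=\exp(i\ka)$. Since the inner product is $K$-invariant, $\ka$ acts on $V$ by skew-adjoint operators, so $i\ka$ acts by self-adjoint operators; thus for $p=\exp(iX)\in P$ the operator $\phi(p)=\exp(\Phi(iX))$ is positive definite and self-adjoint, and its unique positive square root is $\phi(q)$ with $q:=\exp(\tfrac i2 X)\in P$. Consequently
\[
  f(p)=\braket{v,\phi(q)^2 v}=\braket{\phi(q)v,\phi(q)v}=\norm{\phi(q)v}^2>0,
\]
and, since $X\mapsto\tfrac12 X$ is a bijection of $i\ka$, this yields $\inf_{p\in P}f(p)=\inf_{q\in P}\norm{\phi(q)v}^2$.

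Finally I invoke the Cartan decomposition $G=K\exp(i\ka)=KP$ recalled in \cref{sec:basic}: writing an arbitrary $g\in G$ as $g=kp$ with $k\in K$ and $p\in P$ and using $K$-invariance of the norm gives $\norm{\phi(g)v}=\norm{\phi(p)v}$, so $\inf_{q\in P}\norm{\phi(q)v}=\inf_{g\in G}\norm{\phi(g)v}=\capa(v)$. Stringing these together gives $\limsup_{k\to\infty}\cst(f^k)^{1/k}=\capa(v)^2=\inf_{p\in P}f(p)$, which is the claim. Since essentially all the work rests on \cref{thm:main}, I do not expect a serious obstacle; the only steps that need a little care are the passage through the self-adjoint square root --- which is exactly why one takes the infimum of $f$ over $P=\exp(i\ka)$ rather than over all of $G$ --- and the observation that $f$ extends unambiguously from $K$ to $G$ so that the right-hand side is meaningful.
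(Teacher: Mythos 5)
Your argument is correct and is exactly the elaboration the paper leaves implicit when it calls the statement ``\cref{thm:main} reformulated'': you combine \cref{eq:cst vs norm} with \cref{thm:main} to get $\limsup_k \cst(f^k)^{1/k}=\capa(v)^2$, and then identify $\inf_{p\in P}f(p)=\capa(v)^2$ via the self-adjoint square root within $P=\exp(i\ka)$ together with the Cartan decomposition $G=KP$ and $K$-invariance of the norm. The well-definedness remark (Zariski density of $K$ in $G$ makes the extension of $f$ to $G$ canonical) is a sensible clarification of the paper's ``$f$ may be treated as a function on $G$.''
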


\noindent Here $G$, the complexification of $K$, plays the role of $\CC_\times$, and $\inf_{p\in P} f(p) > 0$ plays the role of the critical value $\nu$ in \cref{eq:vdk}.
Generalizing \cref{thm:main mathieu} to all $K$-finite functions in a way that sheds light on the Mathieu conjecture remains a tantalizing open problem.


\section*{Acknowledgments}
We thank Michel Brion, Matthias Christandl, Ankit Garg, Shrawan Kumar, Rafael Oliveira, Paul-Emile Paradan, and Mich\`ele Vergne for interesting conversations.
We are grateful to Geordie Williamson and Jean-Beno\^it Bost for pointing out Ref.~\cite{zhang1994geometric}.

MW acknowledges support by the NWO through Veni grant~680-47-459 and OCENW.KLEIN.267, by the Deutsche Forschungsgemeinschaft (DFG, German Research Foundation) under Germany's Excellence Strategy - EXC\ 2092\ CASA - 390781972, by the BMBF through project Quantum Methods and Benchmarks for Resource Allocation (QuBRA), and by the European Research Council~(ERC) through ERC Starting Grant 101040907-SYMOPTIC.

\bibliographystyle{amsplain}
\bibliography{capdual}

\end{document}